\definecolor{dark-gray}{gray}{0.3}
\definecolor{dkgray}{rgb}{.4,.4,.4}
\definecolor{dkblue}{rgb}{0,0,.5}
\definecolor{medblue}{rgb}{0,0,.75}
\definecolor{rust}{rgb}{0.5,0.1,0.1}
\newtheorem{theorem}{Theorem}[section]
\theoremstyle{definition}
\numberwithin{equation}{section} 
\numberwithin{figure}{section}
\numberwithin{table}{section}
\numberwithin{recipe}{section}
\providecommand{\mathbold}[1]{\bm{#1}}  
\renewcommand{\phi}{\varphi}
\newcommand{\iunit}{\mathrm{i}}
\providecommand{\mathbbm}{\mathbb} 
\newcommand{\R}{\mathbbm{R}}
\newcommand{\sgn}[1]{\operatorname{sgn}{#1}}
\newcommand{\real}{\operatorname{Re}}
\newcommand{\imag}{\operatorname{Im}}
\newcommand{\diff}[1]{\mathrm{d}{#1}}
\newcommand{\idiff}[1]{\, \diff{#1}}
\newcommand{\mtx}[1]{\mathbold{#1}}
\newcommand{\triplenorm}[1]{{\left\vert\kern-0.25ex\left\vert\kern-0.25ex\left\vert #1
    \right\vert\kern-0.25ex\right\vert\kern-0.25ex\right\vert}}
\newcommand{\om}{\omega}
\renewcommand{\th}{\theta}
\begin{document}

\title[Multi-scale Finite-time Blowups of the Constantin--Lax--Majda Model]{Multi-scale Self-similar Finite-time Blowups of\\ the Constantin--Lax--Majda Model for the 3D Euler Equations}
\author[D. Huang, X. Qin, X. Wang]{De Huang$^1$, Xiang Qin$^2$, Xiuyuan Wang$^3$}
\thanks{$^1$ School of Mathematical Sciences, Peking University. Email: dhuang@math.pku.edu.cn}
\thanks{$^2$ Computing and Mathematical Sciences, California Institute of Technology. Email: xqin2@caltech.edu}
\thanks{$^3$ School of Mathematical Sciences, Peking University. Email: wangxiuyuan@stu.pku.edu.cn}

\begin{abstract}
We construct a new class of asymptotically self-similar finite-time blowups that have two collapsing spatial scales for the 1D Constantin--Lax--Majda model. The larger spatial scale measures the decreasing distance between the bulk of the solution and the eventual blowup point, while the smaller scale measures the shrinking size of the bulk of the solution. Our construction is based on an asymptotic analysis applied to the explicit solution formula of the model, and a complex analysis is performed to provide an understanding of multi-scale blowups from a pole dynamics perspective. Similar multi-scale blowup phenomena have recently been discovered for many higher dimensional equations. Our study may provide some understanding of the common mechanism behind these multi-scale blowups.
\end{abstract}

\maketitle

\section{Introduction}
We aim to investigate more complicated finite-time blowup phenomena of the 1D Constantin--Lax--Majda (CLM) model:
\begin{equation}\label{eqt:CLM}
\om_t = \om\mtx{H}(\om),
\end{equation}
where $\mtx{H}$ denotes the Hilbert transform on the real line $\R$. This equation was first introduced by Constantin--Lax--Majda \cite{constantin1985simple} to model the vortex stretching effect in the vorticity formulation of the 3D incompressible Euler equations. One important finding about \eqref{eqt:CLM} in \cite{constantin1985simple} is an explicit formula of the solution $\om(x,t)$ from any suitable initial data $\om(x,0) = \om_0(x)$. Based on this explicit solution formula, finite-time blowup was established under a simple condition for the initial data $\om_0$: The set $\mathrm{S} = \{x:\, \om_0(x)=0\ \text{and}\ \mtx{H}(\om_0)(x)>0\}$ is non-empty. Their result demonstrates that the nonlinear non-local vortex stretching can actually lead to finite-time blowup of the solution. 

Understanding of the blowup phenomena of \eqref{eqt:CLM} was furthered by Elgindi--Jeong \cite{elgindi2020effects}, as they discovered exact self-similar finite-time blowup solutions of the form 
\begin{equation}\label{eqt:one-scale_blowup}
\om(x,t) = (T-t)^{c_\om}\cdot \Omega\left(\frac{x}{(T-t)^{c_l}}\right),
\end{equation}
where $\Omega$ is referred to as the self-similar profile, $T>0$ is some finite blowup time, and $c_\om,c_l$ are the blowup powers for the magnitude and the spatial scaling, respectively. In particular, in the construction of Elgindi--Jeong, $\Omega(x) = -2x/(1+x^2)$ (up to rescaling), $c_\om=-1$, and $c_l=1$. Later, Elgindi--Ghoul--Masmoudi \cite{elgindi2021stable} improved on this result by establishing the stability of such self-similar blowup. Roughly speaking, solutions that are sufficiently close to an exact profile will also blowup in an asymptotically self-similar way as $t\rightarrow T-0$:
\begin{equation}\label{eqt:one-scale_blowup_asymptotic}
\om(x,t) = (T-t)^{c_\om} \left(\Omega\left(\frac{x}{(T-t)^{c_l}}\right) + o(1)\right).
\end{equation}
Suppose that $x=0$ is the only element of the set $\mathrm{S}$. Then an asymptotically self-similar blowup \eqref{eqt:one-scale_blowup_asymptotic} will stably happen around $x=0$ if $\om_0$ satisfies an additional condition that $\om_0'(0)<0$. A similar result was also obtained by Chen--Hou--Huang \cite{chen2021finite} independently. 

Self-similar or asymptotically self-similar finite-time blowups have also been extensively studied for more complicated 1D models for the 3D Euler equations, such as the De Gregorio model \cite{de1996partial,chen2021finite,huang2023self}, the one-parameter family of the generalized CLM models \cite{cordoba2005formation,okamoto2008generalization,castro2010infinite,elgindi2020effects,elgindi2021stable,lushnikov2021collapse,chen2020singularity,huang2024self}, and the 1D Hou--Luo model \cite{luo2014potentially,choi2017finite,chen2022asymptotically,huang2023exact}. These many self-similar finite-time blowups all share the same form in \eqref{eqt:one-scale_blowup} or \eqref{eqt:one-scale_blowup_asymptotic} with the same blowup power $c_\om=-1$ that corresponds to the quadratic nonlinearity of the vortex stretching. Though they may have different profiles $\Omega$ and different spatial scaling $c_l$, there is always one single spatial scaling $(T-t)^{c_l}$ in the blowup.

In this paper, however, we are interested in asymptotically self-similar blowups of \eqref{eqt:CLM} with a two-scale feature:
\begin{equation}\label{eqt:two-scale_blowup}
\om(x,t) = (T-t)^{c_\om}\left(\Omega\left(\frac{x - r(t)\cdot(T-t)^{c_s}}{(T-t)^{c_l}}\right)+ o(1)\right),
\end{equation}
where the larger spatial scale $(T-t)^{c_s}$ (with a smaller power $c_s>0$) measures the distance between the origin and the center location of the bulk part of the solution, while the smaller spatial scale $(T-t)^{c_l}$ (with a larger power $c_l>0$) measures the size of the bulk part of the solution. The function $r(t)$ is continuously differentiable in $t$ and satisfies $0<\lim_{t\rightarrow T-0}r(t)=r(T)<+\infty$; it captures the precise location of the bulk part in the larger scale. That is, the peak location of the solution will converge to the origin like $r(t)(T-t)^{c_s}$, in the meanwhile the width of the peak will shrink even faster like $(T-t)^{c_l}$. It can also be understood as a traveling wave solution that collapses into one point and blows up in finite time. In fact, we will show that the self-similar profile $\Omega$ in the two-scale blowup \eqref{eqt:two-scale_blowup} is an exact traveling wave solution to the CLM model \eqref{eqt:CLM}. Note that a solution of \eqref{eqt:CLM} satisfies the scaling and translation property
\begin{equation}\label{eqt:scaling_property}
\om(x,t)\longrightarrow \alpha \om\left(\beta(x-\gamma),\alpha t\right),
\end{equation}
for any $\alpha,\gamma\in \R$ and $\beta>0$. Hence, one can always rescale the solution so that $T=1$ and $r(T)=1$ (and $\Omega$ will change accordingly). In particular, we will use the explicit solution formula provided in \cite{constantin1985simple} to construct two-scale self-similar blowups of the form \eqref{eqt:two-scale_blowup} with 
\begin{equation}\label{eqt:powers}
c_\om=-(2n+1)/2,\quad c_l=n,\quad c_s=1/2,
\end{equation}
for any positive integer $n\geq 1$ (Theorem \ref{thm:two-scale_general}). Note that in our construction $|c_\om|$ and $c_l$ can be arbitrarily large, while $c_s$ is always $1/2$. Again, suppose that $x=0$ is the only element of the set $\mathrm{S}$. We will show that such a two-scale blowup will happen around $x=0$ given additionally that $\om_0^{(k)}(0)=0$, $k=1,2,\dots,2n$, and $\om_0^{(2n+1)}(0)<0$. In other words, in contrast to the one-scale blowup case, the initial solution $\om_0$ needs to be more degenerate at the blowup point for a two-scale blowup to happen.

It is worth noting that the blowup power $c_\om$ is $-(2n+1)/2$ with $n\geq 1$ in our two-scale cases, meaning that the magnitude is blowing up much faster than that in the one-scale case. Counter-intuitively, this faster growth is actually due to a weaker alignment between $\om$ and $\mtx{H}(\om)$ in the nonlinear stretching term $\om\mtx{H}(\om)$. Simply put, blowing up like $(T-t)^{-1}$ corresponds to the quadratic nonlinearity $\om_t\approx \om^2$, while blowing up like $(T-t)^{-(2n+1)/2}$ corresponds to a weaker nonlinearity $\om_t\approx\om^{(2n+3)/(2n+1)}$. 

To better understand the multi-scale blowup phenomena in a uniform framework, we will perform a complex pole dynamics analysis that has been commonly used in the literature (e.g. \cite{constantin1985simple,elgindi2020effects,lushnikov2021collapse,ambrose2023global,silantyev2024exact}) for studying the CLM model \eqref{eqt:CLM}. By considering the complex function $\eta(z,t) = \om(z,t) + \iunit \mtx{H}(\om)(z,t)$ that solves a local ODE $\eta_t=-\iunit \eta^2/2$, we interpret the finite-time blowup of $\eta(z,t)$ on the real line $\R$ as one or several of its poles touch the real line for the first time. We will construct representative examples of $\eta$ whose poles all start in the lower half of the complex space $\mathbb{C}$ but then travel to the real line with different types of trajectories. Depending on the asymptotic trajectories of the moving poles, the restriction of $\eta$ on $\R$ will exhibit self-similar finite-time blowups with different scale features.

Our motivation of studying multi-scale self-similar blowups arose from recent numerical observations of multi-scale blowups of the 3D Euler equations and its models. One important example is the 2D Boussinesq equations in the half-space $(x_1,x_2)\in\R\times\R_+$, 
\begin{equation}\label{eqt:Boussinesq}
\begin{split}
&\om_t + u_1\om_{x_1} + u_2\om_{x_2} = \th_{x_1},\\
&\th_t + u_1\th_{x_1} + u_2\th_{x_2} = 0,\\
&(u_1,u_2) = \nabla^{\perp}(-\Delta)^{-1}\om,
\end{split}
\end{equation}
where the line $x_2=0$ is a solid boundary. Finite-time singularity focusing at the origin of this system has been numerically discovered for initial data that satisfy certain symmetry and sign conditions. Recently, Chen--Hou \cite{chen2022stable} established the existence of asymptotically self-similar finite-time blowups of the one-scale form \eqref{eqt:one-scale_blowup_asymptotic} using a computer-assisted proof. Remarkably, their result also implies the asymptotically self-similar finite-time blowup of the 3D axisymmetric Euler equations with solid boundary. In addition to some symmetry and sign conditions on the initial data $\om_0, \th_0$ (which ensures $\om_0(0,x_2)=\th_0(0,x_2)=\th_0'(0,x_2)=0$), their proof also requires that $\om_0'(0,0)>0$, $\th_0''(0,0)>0$. However, Liu \cite{liu2017spatial} first numerically observed that, if the initial data instead satisfy a more degenerate condition at the origin,
\[\om_0^{(k)}(0,0)=0,\ k=0,1,2,\quad \om_0^{(3)}(0,0)>0,\quad \th_0^{(k)}(0,0)=0,\ k=0,1,2,3,\quad \th_0^{(4)}(0,0)>0,\]
then the solution of \eqref{eqt:Boussinesq} will develop a two-scale self-similar finite-time blowup of the form \eqref{eqt:two-scale_blowup} focusing at the origin.

A similar scenario happens for the 1D Hou--Luo model proposed in \cite{luo2014potentially}, which models the behavior of the 2D Boussinesq system \eqref{eqt:Boussinesq} on the solid boundary. While Chen--Hou--Huang \cite{chen2022asymptotically} proved existence of a one-scale self-similar blowup of the form \eqref{eqt:one-scale_blowup_asymptotic} with initial data $\om_0(x)\sim x, \th_0(x)\sim x^2$ near $x=0$, Liu \cite{liu2017spatial} also discovered a two-scale self-similar blowup of the form \eqref{eqt:two-scale_blowup} with more degenerate initial data $\om_0(x)\sim x^3, \th_0(x)\sim x^4$ near $x=0$. First, it means that one dimensional models can actually capture the multi-scale blowup phenomena for higher dimensional equations. Second, all these results seem to indicate one common rule: \textit{While less degenerate data can lead to one-scale self-similar finite-time blowups focusing at one point, more degenerate data may lead to self-similar finite-time blowups with multi-scale features}. Our study in this paper thus tries to use the simple model \eqref{eqt:CLM} to provide some understanding of the mechanism behind the multi-scale blowup phenomena.

Another example that has inspired our study comes from recent numerical computations of Hou--Huang \cite{huang2022potential,huang2023potential} and Hou \cite{hou2023potential} on the 3D axisymmetric Euler equations in $\R^3$. They numerically found that, with carefully designed smooth initial data satisfying some symmetry and sign conditions, the 3D axisymmetric Euler equations can develop a two-scale traveling wave solution that tends to blow up at the origin on the symmetry axis. More precisely, the magnitude of the angular vorticity $\om^\th$ tends to blow up in a finite time $T$, and the bulk park of the profile of $\om^\th$ develops a locally self-similar ring around the symmetry axis (in view of axisymmetry) that tends to shrink to one point. According to their numerical fitting, the radius of the ring shrinks like $(T-t)^{1/2}$, while the thickness of the ring shrinks like $(T-t)^1$. Hence, they conjectured that the angular vorticity $\om^\th$ will blow up in a two-scale self-similar form just like \eqref{eqt:two-scale_blowup} with $c_{\om^\th} = -3/2$, $c_l=1$, and $c_s=1/2$ (according to numerical fitting). It is curious that these scaling powers coincide with those in \eqref{eqt:powers} for $n=1$.

We also want to mention a two-scale phenomenon that appears in the head-on collision of two anti-parallel vortex rings in 3D axisymmetric flows without swirl. In the studies of Childress \cite{childress2008growth} and Childress--Gilbert--Valiant \cite{childress2016eroding}, numerical observations and formal calculations suggest that the vorticity concentrated in the vortex rings grows as $t^{4/3}$ (an infinite-time blowup), where the blowup rate $4/3$ was recently proved to be optimal in such scenario \cite{lim2024optimal}. In the meantime, the radius of each vortex ring grows like $t^{4/3}$, while the cross-sectional area of each ring shrinks like $t^{-2}$, demonstrating a spatial two-scale feature in this blowup process. Remarkably, it is observed that, as time goes to infinity, the cross-sectional region converges to the shape of the Sadovskii vortex-patch dipole \cite{sadovskii1971vortex}, which is a steady traveling wave solution to the 2D incompressible Euler equation. It is worth mentioning that the theoretical existence of the Sadovskii vortex-patch dipole was only recently proved by two groups, Huang--Tong \cite{huang2024steady} and Choi--Jeong--Sim \cite{choi2024existence}, using two completely different approaches (one based on a fixed-point method and the other based on a variational method).

As we know, it remains open whether the 3D incompressible Euler equations on $\R^3$ can develop finite-time blowup from smooth initial data, not to mention self-similar ones. So far, the only self-similar finite-time blowup of the 3D incompressible Euler equations on $\R^3$ was constructed by Elgindi \cite{elgindi2021finite} in the axisymmetric setting from $C^{\alpha}$ initial vorticity ($C^{1,\alpha}$ initial velocity) for sufficiently small $\alpha$ (with stability of the blowup established in \cite{elgindi2021stability}). Elgindi's construction corresponds to a one-scale self-similar blowup of the form \eqref{eqt:one-scale_blowup} with $c_\om=-1$ and a profile $\Omega$ that only has $C^\alpha$ regularity at the origin. Unfortunately, no evidence (numerical or theoretical) of one-scale self-similar finite-time blowups of the 3D Euler equations on $\R^3$ with smooth initial data has ever been found. 

Therefore, the numerical observations of Hou--Huang suggest that it might be the correct direction to attack the ultimate problem of the 3D Euler equations by looking for potential self-similar finite-time blowups with multi-scale features. Though the model we consider in this paper is way too simple, we hope that our discussions can still shed some light on the study in this direction.

Finally, we remark that multi-scale finite-time blowups have also been found in other types of PDE systems. For instance, Collot--Ghoul--Masmoudi--Nguyen \cite{collot2023collapsing} recently discovered collapsing-ring finite-time blowup solutions for the $d$-dimensional ($d\geq 3$) Keller--Segel system that have a very similar two-scale structure as in \eqref{eqt:two-scale_blowup}. In their construction, the mass concentrates near a thin sphere shell whose radius shrinks like $(T-t)^{1/d}$ while whose thickness shrinks like $(T-t)^{1-1/d}$ ($T$ is the finite blowup time).

\vspace{-4mm}

\section{Multi-scale blowups of the CLM models}\label{sec:main_resuls}
Our goal is to construct multi-scale self-similar finite-time blowups of the form \eqref{eqt:two-scale_blowup} for the CLM model \eqref{eqt:CLM}. The fundamental tool we will use is the explicit solution formula first found in \cite{constantin1985simple}: for any suitable initial data $\om_0$ (typically $\om_0\in H^1(\R)$), the solution 
\begin{equation}\label{eqt:explicit_solution}
\begin{split}
\om(x,t) &= \frac{4\om_0(x)}{(2-t\mtx{H}(\om_0)(x))^2+t^2\om_0(x)^2},\\
\mtx{H}(\om)(x,t) &= \frac{2\mtx{H}(\om_0)(x)(2-t\mtx{H}(\om_0)(x))-2t\om_0(x)^2}{(2-t\mtx{H}(\om_0)(x))^2+t^2\om_0(x)^2},
\end{split}
\end{equation}
exactly solves \eqref{eqt:CLM}. As stated in \cite[Corollary 1]{constantin1985simple}, it follows immediately from this formula that the solution $\om(x,t)$ will blow up at a finite-time $T$ if and only if the set 
\[\mathrm{S} = \{x\in\R: \om_0(x)=0,\ \mtx{H}(\om_0)(x)>0\}\]
is not empty, and the blowup time $T$ is given by 
\[T = \frac{2}{\sup\{\mtx{H}(\om_0)(x):x\in\mathrm{S}\}}.\] 
A point $x\in\mathrm{S}$ is a blowup point if $\mtx{H}(\om_0)(x) = \sup\{\mtx{H}(\om_0)(x):x\in\mathrm{S}\}$. 

In what follows, we simply consider the case where $x=0$ is the only element of $\mathrm{S}$, i.e. $\mathrm{S} = \{0\}$, and so it must be the blowup point. We can ensure this by imposing odd symmetry and a sign condition on $\om_0$: $\om_0(-x)=-\om_0(x)$, $\om_0(x)<0$ for $x>0$. As a consequence, $\mtx{H}(\om_0)(x)$ is an even function of $x$, and $\mtx{H}(\om_0)(0)>0$. Note that these properties are preserved by the evolution of \eqref{eqt:CLM}. We will argue that, the solution $\om(x,t)$ will develop different types of self-similar blowups focusing at $x=0$, with different orders of degeneracy of $\om_0$ at $x=0$.

\subsection{One-scale self-similar blowup} Let us first review the one-scale self-similar finite-time blowups of the CLM model that have been established in the past. The following result is due to Elgindi--Jeong \cite{elgindi2020effects}: For any initial data $\om_0(x)$ that takes the form
\[\om_0(x) = -\frac{2a^2cx}{a^2+c^2x^2},\quad \mtx{H}(\om_0)(x) = \frac{2a^3}{a^2+c^2x^2},\]
with some $a,c>0$, the solution $\om(x,t)$ is exactly self-similar in time and blows up at $T=1/a$:
\[\om(x,t) = \frac{1}{1-at}\cdot \om_0\left(\frac{x}{1-at}\right).\] 
To prove this result, one only needs to substitute $\om_0$ and $\mtx{H}(\om_0)$ into \eqref{eqt:explicit_solution} and verify that the solution coincides with $(1-at)^{-1}\om_0(x/(1-at))$. Note that $a$ and $c$ are arbitrary positive constants respecting the scaling property \eqref{eqt:scaling_property} of the solution. An example of this type of one-scale self-similar blowup with initial data $\om_0(x)=-2x/(1+x^2)$ is demonstrated in Figure \ref{fig:one-scale}.

We can extend this one-scale blowup result to more general initial data at the cost of losing exact self-similarity. The following theorem was also established in \cite{elgindi2020effects} using an asymptotic argument. Here, we provide a similar proof for the sake of completeness.

\begin{theorem}[One-scale Self-similar Blowup]\label{thm:one-scale}
Assume that the initial value $\om_0\in H^1(\R)$ satisfies 
\begin{enumerate}
\item $\om_0$ is an odd function of $x$;
\item $x=0$ is the unique zero of $\om_0$, and $\sgn(\om_0(x))=-\sgn(x)$;
\item $\om_0\in C^3(-\delta,\delta)$ for some $\delta>0$ and $\om_0'(0) < 0$.
\end{enumerate}
Then, $\mtx{H}(\om_0)(0)>0$. Further assume that 
\[\mtx{H}(\om_0)(0) = 2a,\quad \om_0'(0)=-2c,\]
for some $a,c>0$. Then, as $t\rightarrow T-0$, for $x=O((T-t)^{1/2})$, 
\[
\begin{split}
\om(x,t) &= \frac{1}{T-t}\left(\Omega_1\left(\frac{x}{T-t}\right)+ O(T-t)\right),\\
\mtx{H}(\om)(x,t) &= \frac{1}{T-t}\left(\mtx{H}(\Omega_1)\left(\frac{x}{T-t}\right)+ O(T-t)\right),
\end{split}
\]
where 
\[T = \frac{1}{a},\quad \Omega_1(z) = - \frac{2a^2cz}{a^4 + c^2z^2},\quad \mtx{H}(\Omega_1)(z) = \frac{2a^4}{a^4+c^2z^2}.\]
\end{theorem}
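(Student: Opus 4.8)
The plan is to exploit the explicit solution formula \eqref{eqt:explicit_solution} together with a local Taylor expansion of $\om_0$ and $\mtx{H}(\om_0)$ near $x=0$. First I would record that under assumptions (1)--(3), the evenness of $\mtx{H}(\om_0)$ and the sign condition $\sgn(\om_0(x))=-\sgn(x)$ force $\mtx{H}(\om_0)(0)>0$ (since $x=0$ is the unique zero of $\om_0$, it is the unique element of $\mathrm S$, hence the blowup point by the Constantin--Lax--Majda criterion), so the normalization $\mtx{H}(\om_0)(0)=2a$ with $a>0$ is legitimate and $T=2/\mtx{H}(\om_0)(0)=1/a$. Then, using $\om_0\in C^3$ near $0$ with $\om_0(0)=0$, $\om_0'(0)=-2c$, and evenness of $\mtx{H}(\om_0)$ (so $\mtx{H}(\om_0)'(0)=0$), I would write, for $|x|$ small,
\[
\om_0(x) = -2cx + O(x^3), \qquad \mtx{H}(\om_0)(x) = 2a + O(x^2).
\]

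Next I would substitute the rescaled variable $x = (T-t)z = z/a - z t$ into the denominator $D(x,t) := (2 - t\mtx{H}(\om_0)(x))^2 + t^2\om_0(x)^2$ of \eqref{eqt:explicit_solution}. The key observation is that on the self-similar window $x = O((T-t)^{1/2})$ we have $t \to T = 1/a$ and $x \to 0$, so $2 - t\mtx{H}(\om_0)(x) = 2 - t(2a + O(x^2)) = 2a(T-t) + O(t x^2) = 2a(T-t)(1 + O((T-t)))$ after substituting $x = O(T-t)$ in the relevant regime; more carefully, for $x = (T-t)z$ one gets $2 - t\mtx{H}(\om_0)((T-t)z) = 2a(T-t) + O((T-t)^2)$, and $t\,\om_0((T-t)z) = -2c(T-t)z + O((T-t)^3)$. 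Therefore
\[
D((T-t)z,\,t) = 4a^2(T-t)^2\!\left(1 + O(T-t)\right) + 4c^2(T-t)^2 z^2\!\left(1 + O((T-t)^2)\right) = 4(T-t)^2\!\left(a^2 + c^2 z^2 + O(T-t)\right),
\]
and similarly the numerator $4\om_0((T-t)z) = -8c(T-t)z + O((T-t)^3)$. Dividing, the factors of $(T-t)^2$ cancel against $(T-t)$ from the numerator, leaving
\[
\om((T-t)z,\,t) = \frac{-8c(T-t)z + O((T-t)^3)}{4(T-t)^2(a^2 + c^2z^2 + O(T-t))} = \frac{1}{T-t}\left(\frac{-2cz}{a^2+c^2z^2} + O(T-t)\right),
\]
which matches $\Omega_1$ after noting that the paper's $\Omega_1(z) = -2a^2 c z/(a^4+c^2z^2)$ is written in the unnormalized $x$-variable $x=(T-t)z$ rather than a further-rescaled one — so I would be careful to track exactly which spatial scaling is being used and rescale $z \mapsto z/a$ if needed to land on the stated $a^4$ form (equivalently, the profile is $\Omega_1$ composed with the scaling dictated by $\om_0'(0)$ and $\mtx{H}(\om_0)(0)$). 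The expansion for $\mtx{H}(\om)$ follows identically from the second line of \eqref{eqt:explicit_solution}: the numerator $2\mtx{H}(\om_0)(2 - t\mtx{H}(\om_0)) - 2t\om_0^2 = 2\cdot 2a\cdot 2a(T-t) + O((T-t)^2) = 8a^2(T-t) + O((T-t)^2)$, and dividing by $D = 4(T-t)^2(a^2+c^2z^2+O(T-t))$ gives $\frac{1}{T-t}\left(\frac{2a^2}{a^2+c^2z^2} + O(T-t)\right)$, again matching $\mtx{H}(\Omega_1)$ up to the same bookkeeping of the spatial scale.

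The steps I anticipate needing the most care are: (i) making the $O(\cdot)$ estimates genuinely uniform for $z$ in compact sets — i.e. justifying that the $C^3$ remainder $\om_0(x) + 2cx = O(x^3)$ and the $C^2$ remainder $\mtx{H}(\om_0)(x) - 2a = O(x^2)$ can be controlled by constants depending only on $\delta$ and the $C^3$-norm of $\om_0$ on $(-\delta,\delta)$, and that the denominator $a^2 + c^2z^2 + O(T-t)$ stays bounded away from zero (which it does, since $a>0$); (ii) the claim $\mtx{H}(\om_0)'(0)=0$, which uses that the Hilbert transform of an odd function is even and that $\mtx{H}(\om_0)$ is differentiable at $0$ — this needs a short argument from the regularity/decay of $\om_0$ (being in $H^1$ and $C^3$ locally), perhaps invoking that $\om\in C^1$ near the blowup point is a standing assumption in the cited works; and (iii) verifying that, within the window $x = O((T-t)^{1/2})$, the substitution $x=(T-t)z$ is the right one and that the error terms coming from $t^2\om_0(x)^2$ in $D$ — which is $4c^2 x^2 t^2 + O(x^4) = 4c^2 z^2 (T-t)^2 t^2 + \cdots$ — combine with $t^2\to T^2 = 1/a^2$ correctly, producing the $c^2 z^2$ (versus $c^2 z^2/a^2$) coefficient and hence pinning down whether the stated $a^4$ in $\Omega_1$ is an artifact of a particular normalization choice. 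The main obstacle is purely this normalization/bookkeeping of which of several natural spatial rescalings is used in the theorem statement; the analytic content is a direct, if slightly delicate, asymptotic expansion of the explicit formula.
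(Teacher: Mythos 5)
Your approach is exactly the paper's: Taylor-expand $\om_0$ and $\mtx{H}(\om_0)$ at $0$, substitute into the explicit formula \eqref{eqt:explicit_solution}, and rescale by $x=(T-t)z$. However, there is a concrete computational slip that you flag but do not resolve, and your proposed resolution is wrong. In the denominator $D=(2-t\mtx{H}(\om_0)(x))^2+t^2\om_0(x)^2$, the second term is $4c^2\,t^2x^2+\cdots = 4c^2 t^2(T-t)^2z^2+\cdots$, and the factor $t^2\to T^2=1/a^2$ must be kept: it is \emph{not} asymptotically $1$. Doing so gives $D=4(T-t)^2\bigl(a^2+c^2T^2z^2+\cdots\bigr)$, whence
\[
\om((T-t)z,t)=\frac{1}{T-t}\left(\frac{-2cz}{a^2+c^2z^2/a^2}+O(T-t)\right)=\frac{1}{T-t}\left(\frac{-2a^2cz}{a^4+c^2z^2}+O(T-t)\right),
\]
which is precisely the stated $\Omega_1$ with $z=x/(T-t)$ and no further rescaling. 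The $a^4$ is therefore not a ``normalization artifact,'' and the substitution $z\mapsto z/a$ you suggest would produce $-2acz/(a^4+c^2z^2)$, which still does not match. The same $t^2\to 1/a^2$ bookkeeping is what produces $2a^4/(a^4+c^2z^2)$ for $\mtx{H}(\Omega_1)$.

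Two smaller points. First, the theorem's error bound is claimed uniformly for $x=O((T-t)^{1/2})$, i.e.\ for $z$ as large as $(T-t)^{-1/2}$, not merely for $z$ in compact sets; one must check (as the paper does) that the remainders are of the form $O\bigl((T-t)\cdot\frac{z+z^2}{1+z^2}\bigr)$ and hence still $O(T-t)$ in that window, where in fact the $c^2T^2z^2$ term, not $a^2$, dominates the denominator. Second, your worry about $\mtx{H}(\om_0)'(0)=0$ is legitimate but easily dispatched: $\om_0$ odd implies $\mtx{H}(\om_0)$ even, and the local $C^3$ regularity plus $H^1$ decay gives enough local regularity of $\mtx{H}(\om_0)$ for the expansion $2a+O(x^2)$; the paper treats this at the same level of rigor. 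With the $t^2$ factor restored, your argument coincides with the paper's proof.
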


\begin{proof} Our proof is based on the explicit solution formula \eqref{eqt:explicit_solution}. By the assumptions on $\om_0$, we have
\[\mtx{H}(\om_0)(0) = -\frac{1}{\pi}\int_{-\infty}^{\infty}\frac{\om_0(y)}{y}\idiff y >0.\]
Hence, there exist some $a,c>0$ such that
\[\mtx{H}(\om_0)(0) = 2a,\quad \om_0'(0)=-2c.\]
Then, for $x$ sufficiently close to $0$, we have
\[\om_0(x) = -2cx + O(x^3),\quad \mtx{H}(\om_0)(x) = 2a + O(x^2).\]
Substituting these expansions into \eqref{eqt:explicit_solution} yields, for $x$ sufficiently close to $0$,
\begin{align*}
\om(x,t) &= \frac{-8cx + O(x^3)}{4(1-at+ O(tx^2))^2 + 4t^2c^2x^2 + O(t^2x^4)},\\
\mtx{H}(\om(x,t)) &= \frac{8(a+O(x^2))(1-at + O(tx^2)) + O(tx^2)}{4(1-at+ O(tx^2))^2 + 4t^2c^2x^2 + O(t^2x^4)}.
\end{align*}
Let $z = x/(T-t)$, $x = (T-t)z$, where $T=1/a$. Note that $x=O((T-t)^{1/2})$ means $(T-t)^{1/2}z=O(1)$. Then, for $t\in[0,T)$,
\begin{align*}
\om(x,t) &= \frac{-8c(T-t)z + O((T-t)^3z^3)}{4a^2(T-t)^2 + 4c^2T^2(T-t)^2z^2 + O((T-t)^3z^2) + O((T-t)^4z^4)}\\
&= - \frac{1}{T-t}\cdot \frac{8cz + O((T-t)z)}{4a^2 + 4c^2T^2z^2 + O((T-t)z^2)}\\
&= \frac{1}{T-t}\cdot \left( - \frac{2cz}{a^2 + c^2T^2z^2} + O\left((T-t)\cdot \frac{z+z^2}{1+z^2}\right)\right)\\
&= \frac{1}{T-t}\cdot \left( - \frac{2a^2cz}{a^4 + c^2z^2} + O(T-t)\right),
\end{align*}
and 
\begin{align*}
\mtx{H}(\om)(x,t) &= \frac{8a^2(T-t) + O((T-t)^2z^2)}{4a^2(T-t)^2 + 4c^2T^2(T-t)^2z^2 + O((T-t)^3z^2) + O((T-t)^4z^4)}\\
&= \frac{1}{T-t}\cdot \frac{8a^2 + O((T-t)z^2)}{4a^2 + 4c^2T^2z^2 + O((T-t)z^2)}\\
&= \frac{1}{T-t}\cdot \left( \frac{2a^2}{a^2 + c^2T^2z^2} + O\left((T-t)\cdot \frac{z^2}{1+z^2}\right)\right)\\
&= \frac{1}{T-t}\cdot \left( \frac{2a^4}{a^4+c^2z^2} + O(T-t)\right).
\end{align*}
This proves the theorem.
\end{proof}

Theorem \ref{thm:one-scale} asserts that, as long as $x=0$ is the only zero of $\om_0$ and $\om_0'(0)<0$, $\mtx{H}(\om_0)(0)>0$, then the solution will still blow up at $x=0$ in an asymptotically self-similar way with $c_\om=-1$ and $c_l=1$, the same scaling as in the exact self-similar case.

\subsection{Two-scale self-similar blowup: Basic case} We now turn to our new result on the two-scale asymptotically self-similar blowups of the CLM model. To induce a two-scale blowup, the only modification we need is increasing the order of degeneracy of $\om_0$ at $x=0$. In this subsection, we first consider a basic case in which $\om_0(x)\sim-x^3$ for $x$ close to $0$, as the proof for this case is relatively easier and already delivers the main idea. The general case with a more degenerate $\om_0(x)$ at $x=0$ will be handled in the next subsection with some extra techniques. 

\begin{theorem}[Two-scale Blowup: Basic Case]\label{thm:two-scale}
Assume that the initial value $\om_0\in H^1(\R)$ satisfies 
\begin{enumerate}
\item $\om_0$ is an odd function of $x$;
\item $x=0$ is the unique zero of $\om_0$, and $\sgn(\om_0(x))=-\sgn(x)$;
\item $\om_0\in C^5(-\delta,\delta)$ for some $\delta>0$, $\om_0'(0) = 0$, and $\om_0^{(3)}(0)<0$.
\end{enumerate}
Then, $\mtx{H}(\om_0)(0)>0$ and $\mtx{H}(\om_0)''(0)>0$. Further assume that 
\[\mtx{H}(\om_0)(0) = 2a,\quad \mtx{H}(\om_0)''(0) = 4b,\quad \om_0^{(3)}(0)=-24 c,\]
for some $a,b,c>0$. Then, as $t\rightarrow T-0$, for $x = r(T-t)^{1/2}+o((T-t)^{1/2})$, 
\[
\begin{split}
\om(x,t) &= \frac{1}{(T-t)^{3/2}}\left(\Omega_2\left(\frac{x - r(T-t)^{1/2}}{T-t}\right)+ o(1)\right),\\
\mtx{H}(\om)(x,t) &= \frac{1}{(T-t)^{3/2}}\left(\mtx{H}(\Omega_2)\left(\frac{x - r(T-t)^{1/2}}{T-t}\right)+ o(1)\right),
\end{split}
\]
where 
\[T = \frac{1}{a},\quad r=\frac{a}{b^{1/2}}, \quad \Omega_2(z) = -\frac{a^3b^{3/2}c}{a^4c^2+b^4z^2},\quad \mtx{H}(\Omega_2)(z) = -\frac{ab^{7/2}z}{a^4c^2+b^4z^2}.\]
In particular, for $x = r(T-t)^{1/2}+O(T-t)$,
\begin{equation}\label{eqt:two-scale_asymptotic_formula}
\begin{split}
\om(x,t) &= \frac{1}{(T-t)^{3/2}}\left(\Omega_2\left(\frac{x - r(T-t)^{1/2}}{T-t}\right)+ O\big((T-t)^{1/2}\big)\right),\\
\mtx{H}(\om)(x,t) &= \frac{1}{(T-t)^{3/2}}\left(\mtx{H}(\Omega_2)\left(\frac{x - r(T-t)^{1/2}}{T-t}\right)+ O\big((T-t)^{1/2}\big)\right).
\end{split}
\end{equation}
\end{theorem}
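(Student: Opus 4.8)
The plan is to argue directly from the explicit solution formula \eqref{eqt:explicit_solution}, in the same spirit as the proof of Theorem \ref{thm:one-scale}, but exploiting the higher-order vanishing of $\om_0$ at the origin. \emph{Step 1 (positivity and normalization).} Since $\om_0$ is odd with $\sgn(\om_0(x))=-\sgn(x)$, the identity $\mtx{H}(\om_0)(0)=-\tfrac1\pi\int_{-\infty}^{\infty}\tfrac{\om_0(y)}{y}\idiff y=-\tfrac2\pi\int_0^{\infty}\tfrac{\om_0(y)}{y}\idiff y>0$ holds exactly as before. For the second derivative, $\mtx{H}(\om_0)$ is even and, by the local $C^5$ smoothness of $\om_0$ together with the Hölder-regularity of the Hilbert transform, of class $C^2$ near $x=0$; expanding $\mtx{H}(\om_0)(x)-\mtx{H}(\om_0)(0)$ from the Hilbert kernel one obtains $\mtx{H}(\om_0)''(0)=-\tfrac4\pi\int_0^{\infty}\tfrac{\om_0(y)}{y^3}\idiff y$, which is strictly positive because $\om_0<0$ on $(0,\infty)$ while $\om_0(y)/y^3$ stays bounded near $0$ thanks to $\om_0'(0)=0$ and oddness. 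Hence we may set $\mtx{H}(\om_0)(0)=2a$, $\mtx{H}(\om_0)''(0)=4b$, $\om_0^{(3)}(0)=-24c$ with $a,b,c>0$, and near $x=0$,
\[\om_0(x)=-4cx^3+O(x^5),\qquad \mtx{H}(\om_0)(x)=2a+2bx^2+O(x^4).\]

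\emph{Step 2 (self-similar variable and the key cancellation).} Put $T=1/a$, $\tau=T-t$, $r=a/b^{1/2}$, and $z=(x-r\tau^{1/2})/\tau$, so $x=r\tau^{1/2}+\tau z$ and $x^2=r^2\tau+2r\tau^{3/2}z+\tau^2z^2$; the two ranges in the statement are $z=o(\tau^{-1/2})$ and $z=O(1)$. Inserting the expansions into $2-t\mtx{H}(\om_0)(x)$, the part independent of $z$ is $2-2at-2btr^2\tau=2\tau(a-a^2t)=2a^2\tau^2$, using $br^2=a^2$, the choice $T=1/a$ (which forces $2-2at=2a\tau$), and $1-at=a\tau$; the part linear in $z$ is $-4btr\tau^{3/2}z=-4b^{1/2}\tau^{3/2}z\,(1+O(\tau))$, using $br=ab^{1/2}$ and $t=1/a+O(\tau)$. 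Therefore
\[2-t\mtx{H}(\om_0)(x)=-4b^{1/2}\tau^{3/2}z+O\big(\tau^{2}(1+z^2)\big),\]
the crucial point being that these particular choices of $T$ and $r$ force this quantity to be of size $\tau^{3/2}$ rather than $\tau$. One also gets $t^2\om_0(x)^2=\tfrac{16c^2a^4}{b^3}\tau^3(1+o(1))$ (from $r^6=a^6/b^3$ and $t\to1/a$), so the common denominator in \eqref{eqt:explicit_solution} equals $16\tau^3\big(bz^2+\tfrac{c^2a^4}{b^3}+o(1+z^2)\big)$, while the two numerators are $4\om_0(x)=-16c\,\tfrac{a^3}{b^{3/2}}\tau^{3/2}(1+o(1))$ and $2\mtx{H}(\om_0)(x)(2-t\mtx{H}(\om_0)(x))-2t\om_0(x)^2=-16ab^{1/2}\tau^{3/2}z+O\big(\tau^2(1+z^2)\big)$.

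\emph{Step 3 (profiles and error control).} Dividing numerators by denominators, the $\tau^{3/2}$ scalings cancel and leave $\om(x,t)=\tau^{-3/2}(\Omega_2(z)+\text{err})$ and $\mtx{H}(\om)(x,t)=\tau^{-3/2}(\mtx{H}(\Omega_2)(z)+\text{err})$ where, after clearing a factor $b^3$, $\Omega_2(z)=-\tfrac{a^3b^{3/2}c}{a^4c^2+b^4z^2}$ and $\mtx{H}(\Omega_2)(z)=-\tfrac{ab^{7/2}z}{a^4c^2+b^4z^2}$; that $\mtx{H}(\Omega_2)$ is the genuine Hilbert transform of $\Omega_2$ needs no separate check, since \eqref{eqt:explicit_solution} produces $\om$ and $\mtx{H}(\om)$ simultaneously. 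To bound $\text{err}$, one uses that in the relevant range $bz^2+\tfrac{c^2a^4}{b^3}\ge \kappa(1+z^2)$ for a constant $\kappa>0$ and that $\om_0(x)^2$ is bounded below by a constant times $\tau^3$; then the $o(1+z^2)$ and $O(\tau^2(1+z^2))$ perturbations of numerator and denominator produce a relative error that is $o(1)$ uniformly over $z=o(\tau^{-1/2})$ and $O(\tau^{1/2})$ over $z=O(1)$, which gives the $o(1)$ asymptotics and the sharper form \eqref{eqt:two-scale_asymptotic_formula} respectively. The conceptual obstacle is Step 2 — recognizing that $T=1/a$ together with $r=a/b^{1/2}$ yields the exact cancellation making $2-t\mtx{H}(\om_0)(x)$ of order $\tau^{3/2}$, which is precisely what renders the faster magnitude rate $\tau^{-3/2}$ and the two spatial scales $\tau^{1/2},\tau$ mutually consistent; the technical obstacle is carrying the $o$-versus-$O$ remainder bookkeeping of Step 3 uniformly over the unbounded window $z=o(\tau^{-1/2})$, where the quadratic lower bound on the denominator is what saves the argument.
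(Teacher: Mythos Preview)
Your proposal is correct and follows essentially the same approach as the paper's proof: both plug the Taylor expansions $\om_0(x)=-4cx^3+O(x^5)$, $\mtx{H}(\om_0)(x)=2a+2bx^2+O(x^4)$ into the explicit formula \eqref{eqt:explicit_solution}, introduce the rescaled variable $z=(x-r\tau^{1/2})/\tau$, and exploit the cancellation $a\tau-bTr^2\tau=0$ (equivalently $br^2=a^2$) to show that $2-t\mtx{H}(\om_0)(x)$ drops from order $\tau$ to order $\tau^{3/2}$, after which the asymptotics and error bounds follow by straightforward division. The only cosmetic differences are your use of $\tau=T-t$ and your slightly more explicit discussion of the denominator lower bound $bz^2+c^2a^4/b^3\gtrsim 1+z^2$, which the paper leaves implicit.
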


\begin{proof} Our proof is again based on the explicit formula \eqref{eqt:explicit_solution}. By the assumptions on $\om_0$, we have
\[\mtx{H}(\om_0)(0) = -\frac{1}{\pi}\int_{-\infty}^{\infty}\frac{\om_0(y)}{y}\idiff y >0,\quad \mtx{H}(\om_0)''(0) = -\frac{2}{\pi}\int_{-\infty}^{\infty}\frac{\om_0(y)}{y^3}\idiff y >0.\]
Hence, there exist some $a,b,c>0$ such that
\[\mtx{H}(\om_0)(0) = 2a,\quad \mtx{H}(\om_0)''(0) = 4b,\quad \om_0^{(3)}(0)=-24c.\]
Then, for $x$ sufficiently close to $0$, we have
\[\om_0(x) = -4cx^3 + O(x^5),\quad \mtx{H}(\om_0)(x) = 2a + 2bx^2 +O(x^4).\]
Substituting these expansions into \eqref{eqt:explicit_solution} yields, for $x$ sufficiently close to $0$,
\begin{align*}
\om(x,t) &= \frac{-16cx^3 + O(x^5)}{4(1-at-btx^2 + O(tx^4))^2+16t^2c^2x^6 + O(t^2x^8)},\\
\mtx{H}(\om)(x,t) &= \frac{8(a+bx^2+O(x^4))(1-at-btx^2+O(x^4)) + O(tx^6)}{4(1-at-btx^2 + O(tx^4))^2+16t^2c^2x^6 + O(t^2x^8)}.
\end{align*}
Let 
\[z = \frac{x - r(T-t)^{1/2}}{T-t},\quad x = r(T-t)^{1/2} + (T-t)z,\]
where $T=1/a$, $r=a/b^{1/2}$. Note that $x=r(T-t)^{1/2} + o(1)$ implies $(T-t)^{1/2}z=o(1)$. We first expand a crucial term that bears a critical cancellation: 
\begin{align*}
1-at-btx^2 + O(tx^4) &= a(T-t) - bT\big(r^2(T-t) + 2r(T-t)^{3/2}z + (T-t)^2z^2\big)\\
&\quad + b(T-t)\big(r^2(T-t) + 2r(T-t)^{3/2}z + (T-t)^2z^2\big) + O((T-t)^2)\\
&= -2bTr(T-t)^{3/2}z + O\big((T-t)^2(1+z^2)\big).
\end{align*}
We can see that it is important for $b$ to be positive, so that the leading term $a(T-t) - bTr^2(T-t)$ can cancel out when $x\sim r(T-t)^{1/2}$. Hence, for $t\in[0,T)$ and $(T-t)^{1/2}z=o(1)$,
\begin{align*}
\om(x,t) &= \frac{-16cr^3(T-t)^{3/2} + O((T-t)^2z)}{16b^2T^2r^2(T-t)^3z^2  + 16c^2T^2r^6(T-t)^3 +O((T-t)^{7/2}z(1+z^2))}\\
&= - \frac{1}{(T-t)^{3/2}}\cdot \frac{cr^3 + O((T-t)^{1/2}z)}{c^2T^2r^6+b^2T^2r^2z^2+O((T-t)^{1/2}z(1+z^2))}\\
&= \frac{1}{(T-t)^{3/2}}\cdot \left(-\frac{cr^3}{c^2T^2r^6+b^2T^2r^2z^2} + O\big((T-t)^{1/2}z\big)\right)\\
&= \frac{1}{(T-t)^{3/2}}\cdot \left(-\frac{a^3b^{3/2}c}{a^4c^2+b^4z^2} + o(1)\right),
\end{align*}
and
\begin{align*}
\mtx{H}(\om)(x,t) &= \frac{-16abTr(T-t)^{3/2}z + O((T-t)^2(1+z^2))}{16b^2T^2r^2(T-t)^3z^2 + 16c^2T^2r^6(T-t)^3 +O((T-t)^{7/2}z(1+z^2))}\\
&= - \frac{1}{(T-t)^{3/2}}\cdot \frac{abTrz + O((T-t)^{1/2}(1+z^2))}{c^2T^2r^6+b^2T^2r^2z^2+O((T-t)^{1/2}z(1+z^2))}\\
&= \frac{1}{(T-t)^{3/2}}\cdot \left(-\frac{abTrz}{c^2T^2r^6+b^2T^2r^2z^2} + O\big((T-t)^{1/2}(1+z)\big)\right)\\
&= \frac{1}{(T-t)^{3/2}}\cdot \left(-\frac{ab^{7/2}z}{a^4c^2+b^4z^2} + o(1)\right).
\end{align*}
In particular, for $x=r(T-t)^{1/2} + O(T-t)$, i.e. $z=O(1)$,
\begin{align*}
\om(x,t) &= \frac{1}{(T-t)^{3/2}}\cdot \left(-\frac{a^3b^{3/2}c}{a^4c^2+b^4z^2} + O\big((T-t)^{1/2}\big)\right),\\
\mtx{H}(\om)(x,t) &= \frac{1}{(T-t)^{3/2}}\cdot \left(-\frac{ab^{7/2}z}{a^4c^2+b^4z^2} + O\big((T-t)^{1/2}\big)\right).
\end{align*}
The theorem is thus proved.
\end{proof}

What Theorem \ref{thm:two-scale} describes is a focusing finite-time blowup solution that has two spatial scales. The larger scale $(T-t)^{1/2}$ captures the traveling maximum location of the solution, while the smaller scale $T-t$ measures the characteristic width of the bulk part of the solution. It can be viewed as a traveling wave solution that collapses into the origin and blows up in finite time. An example of this type of two-scale asymptotically self-similar blowup with initial data $\om_0(x)=-2x^3/(4+x^4)$ is demonstrated in Figure \ref{fig:two-scale}.

\subsection{Two-scale self-similar blowup: General case} Now, we consider more general initial data with an arbitrary order of degeneracy at the blowup point. Let us briefly explain the main idea. Suppose that $\om_0(x)\sim -x^{2n+1}$ near $x=0$ for some integer $n\geq 1$. We will use the implicit function theorem to find a moving point $X(t)$ near the origin such that $2-t\mtx{H}(\om_0)(X(t))=0$, and we will show that $|X(t)| \sim (T-t)^{1/2}$ as $t\rightarrow T-0$. Then, by the explicit solution formula \eqref{eqt:explicit_solution}, $|\om(X(t),t)|$ will blow up like $|\om_0(X(t))|^{-1}\sim |X(t)|^{-(2n+1)} \sim (T-t)^{-(2n+1)/2}$. Employing this idea, we can then generalize Theorem \ref{thm:two-scale} to the following.

\begin{theorem}[Two-scale Blowup: General Case]\label{thm:two-scale_general}
Assume that the initial value $\om_0\in H^1(\R)$ satisfies 
\begin{enumerate}
\item $\om_0$ is an odd function of $x$;
\item $x=0$ is the unique zero of $\om_0$, and $\sgn(\om_0(x))=-\sgn(x)$;
\item there is some positive integer $n\geq1$ such that $\om_0\in C^{2n+3}(-\delta,\delta)$ for some $\delta>0$, and 
\[\om_0^{(k)}(0) = 0,\ k=0,1,\dots,2n,\quad \om_0^{(2n+1)}(0)<0.\]
\end{enumerate}
Then, $\mtx{H}(\om_0)(0)>0$ and $\mtx{H}(\om_0)''(0)>0$. Further assume that 
\[\mtx{H}(\om_0)(0) = 2a,\quad \mtx{H}(\om_0)''(0) = 4b,\quad \om_0^{(2n+1)}(0)=-4c(2n+1)!\,,\]
for some $a,b,c>0$. Let 
\[T = \frac{1}{a}, \quad \Omega_{2,n}(z) = -\frac{a^{2n+1}b^{(2n+1)/2}c}{a^{4n}c^2+b^{2n+2}z^2},\quad \mtx{H}(\Omega_{2,n})(z) = -\frac{ab^{(4n+3)/2}z}{a^{4n}c^2+b^{2n+2}z^2}.\]
Then, there exists some function $r(t)\in C^1([t_0,T])$ for some $t_0\in(0,T)$ satisfying 
\[r(T) = \frac{a}{b^{1/2}},\]
such that, as $t\rightarrow T-0$, for $x = r(t)\cdot (T-t)^{1/2}+O((T-t)^n)$, 
\begin{equation}\label{eqt:two-scale_general_formula}
\begin{split}
\om(x,t) &= \frac{1}{(T-t)^{(2n+1)/2}}\left(\Omega_{2,n}\left(\frac{x - r(t)(T-t)^{1/2}}{(T-t)^n}\right)+ O\big((T-t)^{1/2}\big)\right),\\
\mtx{H}(\om)(x,t) &= \frac{1}{(T-t)^{(2n+1)/2}}\left(\mtx{H}(\Omega_{2,n})\left(\frac{x - r(t)(T-t)^{1/2}}{(T-t)^n}\right)+ O\big((T-t)^{1/2}\big)\right).
\end{split}
\end{equation}
\end{theorem}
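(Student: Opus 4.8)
The plan is to execute the strategy announced before the statement: the higher-order vanishing of $\om_0$ at the origin forces the zero of $x\mapsto 2-t\mtx{H}(\om_0)(x)$ that controls the blowup to move to a point $X(t)$ with $X(t)\sim (a/b^{1/2})(T-t)^{1/2}$, and the behaviour of the solution near $X(t)$ is then extracted from the explicit formula \eqref{eqt:explicit_solution}. As in the proof of Theorem~\ref{thm:two-scale}, the oddness and sign hypotheses first give the absolutely convergent representations
\[
\mtx{H}(\om_0)(0)=-\frac1\pi\int_{-\infty}^{\infty}\frac{\om_0(y)}{y}\idiff y>0,
\qquad
\mtx{H}(\om_0)''(0)=-\frac2\pi\int_{-\infty}^{\infty}\frac{\om_0(y)}{y^3}\idiff y>0,
\]
the integrands being negative on $\R\setminus\{0\}$ and integrable near $0$ because $\om_0$ vanishes to order $2n+1$ there. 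Normalizing $\mtx{H}(\om_0)(0)=2a$, $\mtx{H}(\om_0)''(0)=4b$, $\om_0^{(2n+1)}(0)=-4c(2n+1)!$ and using that $\mtx{H}(\om_0)$ is, near $0$, even and of class $C^{2n+2}$ (inherited from the hypotheses on $\om_0$ by a standard cutoff argument for the Hilbert transform), one obtains, for $x$ near $0$,
\[
\om_0(x)=-4cx^{2n+1}+O(x^{2n+3}),\qquad
\mtx{H}(\om_0)(x)=2a+2bx^2+O(x^4),\qquad
\mtx{H}(\om_0)'(x)=4bx+O(x^3).
\]

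Second---the one genuinely new step compared with Theorem~\ref{thm:two-scale}---I would use the implicit function theorem to pin down the relevant zero $X(t)$ of $x\mapsto 2-t\mtx{H}(\om_0)(x)$ \emph{exactly}. Since $\mtx{H}(\om_0)$ is even, strictly increasing on a right neighbourhood of $0$, and $2/t>2a=\mtx{H}(\om_0)(0)$ with $2/t\to 2a$ as $t\to T=1/a$, there is, for $t$ slightly below $T$, a unique small $X(t)>0$ with $t\mtx{H}(\om_0)(X(t))=2$. To get its rate and regularity, set $X=((T-t)Y)^{1/2}$; since $\mtx{H}(\om_0)$ is even and $C^{2n+2}$, the map $\Phi(u):=\mtx{H}(\om_0)(u^{1/2})$ is $C^{n+1}$ near $0$ with $\Phi(0)=2a$, $\Phi'(0)=2b$, and the equation reads $\Phi((T-t)Y)=2/t$. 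Writing $\Phi(v)-2a=v\,\Psi(v)$ with $\Psi(0)=2b$ and dividing by $(T-t)$ (legitimate for $t<T$), it becomes
\[
F(Y,t):=Y\,\Psi\big((T-t)Y\big)-\frac{2a}{t}=0,
\qquad F\Big(\tfrac{a^2}{b},\,T\Big)=0,
\qquad \partial_Y F\Big(\tfrac{a^2}{b},\,T\Big)=2b\neq 0,
\]
so the implicit function theorem supplies $Y\in C^1([t_0,T])$ with $Y(T)=a^2/b$; then $r(t):=Y(t)^{1/2}\in C^1([t_0,T])$, $r(T)=a/b^{1/2}$, and $X(t)=r(t)(T-t)^{1/2}$ satisfies $2-t\mtx{H}(\om_0)(X(t))=0$ identically. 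This exactness is essential: the naive choice $r\equiv a/b^{1/2}$ leaves a residual $a^2(T-t)^2$ inside $2-t\mtx{H}(\om_0)(x)$, harmless when $n=1$ (as in Theorem~\ref{thm:two-scale}) but of strictly lower order than the inner-scale term $\sim (T-t)^{n+1/2}$ once $n\ge 2$, which would corrupt the profile.

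Third, write $x=r(t)(T-t)^{1/2}+(T-t)^n z$ with $z=O(1)$ (this is exactly the stated range $x=r(t)(T-t)^{1/2}+O((T-t)^n)$). Taylor-expanding $\mtx{H}(\om_0)$ about $X(t)$ and using $2-t\mtx{H}(\om_0)(X(t))=0$, $\mtx{H}(\om_0)'(x)=4bx+O(x^3)$, $X(t)=r(t)(T-t)^{1/2}$, $t=1/a+O(T-t)$, one gets
\[
2-t\mtx{H}(\om_0)(x)=-4b^{1/2}(T-t)^{n+1/2}z+O\big((T-t)^{n+3/2}|z|\big)+O\big((T-t)^{2n}z^2\big),
\]
together with $\om_0(x)=-4c\,r(t)^{2n+1}(T-t)^{(2n+1)/2}+O((T-t)^{2n}|z|)+O((T-t)^{(2n+3)/2})$ and $\mtx{H}(\om_0)(x)=2a+O(T-t)$. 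Substituting into \eqref{eqt:explicit_solution} and using $r(t)=a/b^{1/2}+O(T-t)$, the common denominator $(2-t\mtx{H}(\om_0)(x))^2+t^2\om_0(x)^2$ equals $\tfrac{16}{b^{2n+1}}(T-t)^{2n+1}\big(a^{4n}c^2+b^{2n+2}z^2\big)+O\big((T-t)^{2n+3/2}\big)$, which is bounded below by a fixed positive multiple of $(T-t)^{2n+1}$ uniformly for $z=O(1)$. Dividing the two numerators by this, the powers of $(T-t)$ combine to $(T-t)^{-(2n+1)/2}$, the leading ratios collapse exactly to $\Omega_{2,n}(z)$ and $\mtx{H}(\Omega_{2,n})(z)$ (which form a Hilbert-transform pair---a useful consistency check), and every leftover term---from the $O(x^3)$ tail of $\mtx{H}(\om_0)'$, the $O(x^{2n+3})$ tail of $\om_0$, the quadratic Taylor remainder, and the deviations of $r(t)$ and $t$ from their limiting values---is $O((T-t)^{1/2})$ relative to the leading profile. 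This establishes \eqref{eqt:two-scale_general_formula}.

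The main obstacle is Step two combined with the error bookkeeping in Step three. One must locate $X(t)$ \emph{exactly}, with $C^1$-dependence on $t$ and the sharp rate $X(t)\sim (a/b^{1/2})(T-t)^{1/2}$---the trick of dividing by $(T-t)$ above is what circumvents the degeneracy $\partial_Y[\Phi((T-t)Y)]=(T-t)\Phi'\to 0$ of the obvious parametrization---and then one must verify, uniformly in $z=O(1)$ and carrying along the $n$-dependent scales, that all residual terms really are $O((T-t)^{1/2})$ after the division; it is crucial here that pinning $X(t)$ down exactly removes the stray $(T-t)^2$ contribution and that the denominator stays bounded away from $0$.
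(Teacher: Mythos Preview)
Your proposal is correct and follows essentially the same approach as the paper: both locate, via the implicit function theorem, the exact moving root $X(t)=r(t)(T-t)^{1/2}$ of $2-t\mtx{H}(\om_0)(x)=0$ (the paper parametrizes this through the auxiliary function $q(x)=(\mtx{H}(\om_0)(x)-2a)/(2bx^2)$ and the equation $Xq(X)^{1/2}=s$, which is equivalent to your $Y\Psi((T-t)Y)=2a/t$), then substitute $x=r(t)(T-t)^{1/2}+(T-t)^nz$ into the explicit formula \eqref{eqt:explicit_solution} and track the error terms. Your observation that exact cancellation at $X(t)$ is indispensable for $n\ge 2$ is precisely the point the paper emphasizes as the new ingredient beyond Theorem~\ref{thm:two-scale}.
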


\begin{proof}
We will follow the steps in the proof of Theorem \ref{thm:two-scale}, except that we need to use the implicit function theorem to track the center position of the bulk of the solution. 

Again, by the assumptions on $\om_0$, we have
\[\mtx{H}(\om_0)(0) = -\frac{1}{\pi}\int_{-\infty}^{\infty}\frac{\om_0(y)}{y}\idiff y >0,\quad \mtx{H}(\om_0)''(0) = -\frac{2}{\pi}\int_{-\infty}^{\infty}\frac{\om_0(y)}{y^3}\idiff y >0.\]
Hence, there exist some $a,b,c>0$ such that
\[\mtx{H}(\om_0)(0) = 2a,\quad \mtx{H}(\om_0)''(0) = 4b,\quad \om_0^{(2n+1)}(0)=-4c(2n+1)!.\]
For $x\in(-\delta,\delta)$, define 
\begin{equation}\label{eqt:p_q_definition}
p(x) := \frac{\om_0(x)}{-4cx^{2n+1}},\quad q(x) := \frac{\mtx{H}(\om_0)(x)-\mtx{H}(\om_0)(0)}{2bx^2} = \frac{1}{2b}\mtx{H}\left(\frac{\om_0}{x^2}\right)(x).
\end{equation}
Then, by the assumptions on $\om_0$, it is easy to find that $p,q\in C^2(-\delta,\delta)$ are both even functions of $x$, and $p(0) = q(0) = 1$. Hence, for any $x_0,x\in (-\delta,\delta)$, we have
\begin{equation}\label{eqt:general_step_1_1}
p(x) = p(x_0) + O(|x-x_0|),\quad q(x) = q(x_0) + O(|x-x_0|),
\end{equation}
and in particular, 
\begin{equation}\label{eqt:general_step_1_2}
p(x) = 1 + O(x^2),\quad q(x) = 1 + O(x^2).
\end{equation}
Moreover, we can make $\delta$ sufficiently small so that $p(x),q(x)\geq 1/2$ for $x\in(-\delta,\delta)$. We then rewrite $\om_0$ and $\mtx{H}(\om_0)$ as 
\[\om_0(x) = -4cx^{2n+1}p(x),\quad \mtx{H}(\om_0)(x) = 2a + 2bx^2q(x).\]
Substituting these expressions into \eqref{eqt:explicit_solution} yields, for $x\in(-\delta,\delta)$,
\begin{equation}\label{eqt:general_step_2}
\begin{split}
\om(x,t) &= \frac{-16cx^{2n+1}p(x)}{4\big(1- at - btx^2q(x)\big)^2+16t^2c^2x^{4n+2}p(x)^2},\\
\mtx{H}(\om)(x,t) &= \frac{8\big(a+bx^2q(x)\big)\big(1- at-btx^2q(x)\big) -32tc^2x^{4n+2}p(x)^2}{4\big(1- at - btx^2q(x)\big)^2+16t^2c^2x^{4n+2}p(x)^2}.
\end{split}
\end{equation} 

Next, we shall find a proper traveling point of $x$ where the leading order terms in $1- at - btx^2q(x)$ cancel to $0$. Let $X = X(s)$ with $X(0)=0$ be the implicit function determined by the equation 
\[X\cdot q(X)^{1/2} = s.\]
Note that 
\[\big(xq(x)^{1/2}\big)'\Big|_{x=0}= \left(q(x)^{1/2} + \frac{xq'(x)}{2q(x)^{1/2}}\right)\Big|_{x=0}= q(0)^{1/2} = 1.\]
Hence, by the implicit function theorem, there is some $s_0$ such that, for $s\in[-s_0,s_0]$, there is some function $X(s)\in C^1([-s_0,s_0])$ satisfying $X(s)\in(-\delta,\delta)$, and
\[X(s)\cdot q(X(s))^{1/2} = s.\]
Note that, since $q\in C^2(-\delta,\delta)$, $q(0)=1$, $q'(0)=0$, we have $X(s)/s = q(X(s))^{-1/2}\in C^1([-s_0,s_0])$, and
\[\lim_{s\rightarrow 0}\frac{X(s)}{s}= \lim_{s\rightarrow 0}X'(s) = \lim_{s\rightarrow 0}\frac{1}{q(X(s))^{1/2}} = 1,\]
\[\lim_{s\rightarrow 0}\frac{1}{s}\left(\frac{X(s)}{s}\right)' = \lim_{s\rightarrow 0}\frac{1}{s}\cdot \frac{-q'(X(s))X'(s)}{2q(X(s))^{3/2}} = -\lim_{s\rightarrow 0}\frac{X'(s)}{2q(X(s))^{3/2}}\cdot \frac{q'(X(s))}{X(s)} \cdot \frac{X(s)}{s}= -\frac{q''(0)}{2}.\]
The limits above imply that $g(\tau) \in C^1([0,s_0^2])$ where $g(\tau):= X(\tau^{1/2})/\tau^{1/2}$. In particular, we have
\[\frac{X(s)}{s} = 1 + O(s^2).\]
Let $T=1/a$, and let $t_0\in(0,T)$ be determined such that 
\[\frac{a(T-t)}{bt} \leq s_0^2\quad \text{for $t\in[t_0,T]$}. \]
Define a function $r(t)$ on $[t_0,T]$ as
\[r(t) := \frac{1}{(T-t)^{1/2}}\cdot X\left(\left(\frac{a(T-t)}{bt}\right)^{1/2}\right).\]
Owing to the properties of $X(s)$, it is straightforward to check that $r(t)\in C^1([t_0,T])$, and it satisfies 
\begin{equation}\label{eqt:general_step_3}
r(t)^2q\big(r(t)(T-t)^{1/2}\big) = \frac{a}{bt},\quad t\in[t_0,T],
\end{equation}
\[r(T) = \lim_{t\rightarrow T-0}\left(\frac{a}{bt}\right)^{1/2}\cdot \left(\frac{bt}{a(T-t)}\right)^{1/2}\cdot X\left(\left(\frac{a(T-t)}{bt}\right)^{1/2}\right) = \left(\frac{a}{bT}\right)^{1/2} = \frac{a}{b^{1/2}},\]
and 
\begin{equation}\label{eqt:general_step_4}
r(t) = r(T) + O(T-t) = \frac{a}{b^{1/2}} + O(T-t).
\end{equation}

Now, let us go back to the expressions of $\om(x,t)$ and $\mtx{H}(\om)(x,t)$ in \eqref{eqt:general_step_2}. Consider the change of variables
\[z = \frac{x-r(t)\cdot (T-t)^{1/2}}{(T-t)^n},\quad x = r(t)\cdot (T-t)^{1/2}+(T-t)^nz.\]
Note that $x = r(t)\cdot (T-t)^{1/2}+O((T-t)^n)$ implies $z = O(1)$. Using the expansions of $q$ in \eqref{eqt:general_step_1_1} and \eqref{eqt:general_step_1_2} we find that, for $z=O(1)$ and for $t$ sufficiently close to $T$,
\begin{align*}
q\big(r(t)(T-t)^{1/2}+(T-t)^nz\big) &= q(r(t)(T-t)^{1/2}) + O((T-t)^n) \\
&= q(0) + O(T-t) + O((T-t)^n) = 1 + O(T-t),
\end{align*}
and similarly, 
\[p\big(r(t)(T-t)^{1/2}+(T-t)^nz\big) = 1 + O(T-t).\]
We then calculate that, for $t\in[t_0,T]$, 
\begin{align*}
1-at-btx^2q(x) &= a(T-t) - bt\cdot \big(r(t)(T-t)^{1/2}+(T-t)^nz\big)^2\cdot q\big(r(t)(T-t)^{1/2}+(T-t)^nz\big)\\
&= a(T-t) - bt\cdot r(t)^2(T-t)\cdot q(r(t)(T-t)^{1/2}) \\
&\quad - 2bt\cdot r(t)(T-t)^{(2n+1)/2}z\cdot q(r(t)(T-t)^{1/2}) + O((T-t)^{n+1})\\
&= - 2bt\cdot r(t)(T-t)^{(2n+1)/2}z\cdot q(r(t)(T-t)^{1/2}) + O((T-t)^{n+1})\\
&= - 2bT\cdot r(t)(T-t)^{(2n+1)/2}z + O((T-t)^{n+1}).
\end{align*}
We have used identity \eqref{eqt:general_step_3} to cancel out the leading terms proportional to $T-t$, that is,
\[a(T-t) - bt\cdot r(t)^2(T-t)\cdot q(r(t)(T-t)^{1/2}) = 0.\]
Therefore, we can proceed with the expressions in \eqref{eqt:general_step_2} to obtain, for $t\in[t_0,T)$ and $z=O(1)$,
\begin{align*}
\om(x,t) &= \frac{-16c r(t)^{2n+1}(T-t)^{(2n+1)/2} + O((T-t)^{2n}) + O((T-t)^{(2n+3)/2})}{16b^2T^2 r(t)^2(T-t)^{2n+1}z^2  + 16c^2T^2 r(t)^{4n+2}(T-t)^{2n+1} +O((T-t)^{(4n+3)/2})}\\
&= - \frac{1}{(T-t)^{(2n+1)/2}}\cdot \frac{cr(t)^{2n+1} + O((T-t)^{(2n-1)/2}) + O((T-t)) }{c^2T^2r(t)^{4n+2}+b^2T^2r(t)^2z^2+O((T-t)^{1/2})}\\
&= - \frac{1}{(T-t)^{(2n+1)/2}}\cdot \frac{cr(T)^{2n+1} + O((T-t)^{1/2}) }{c^2T^2r(T)^{4n+2}+b^2T^2r(T)^2z^2+O((T-t)^{1/2})}\\
&= \frac{1}{(T-t)^{(2n+1)/2}}\cdot \left(-\frac{cr(T)^{2n+1}}{c^2T^2r(T)^{4n+2}+b^2T^2r(T)^2z^2} + O\big((T-t)^{1/2}\big)\right)\\
&= \frac{1}{(T-t)^{(2n+1)/2}}\cdot \left(-\frac{a^{2n+1}b^{(2n+1)/2}c}{a^{4n}c^2+b^{2n+2}z^2} + O\big((T-t)^{1/2}\big)\right),
\end{align*}
and
\begin{align*}
\mtx{H}(\om)(x,t) &= \frac{-16abTr(t)(T-t)^{(2n+1)/2}z + O((T-t)^{n+1})}{16b^2T^2 r(t)^2(T-t)^{2n+1}z^2  + 16c^2T^2 r(t)^{4n+2}(T-t)^{2n+1} +O((T-t)^{(4n+3)/2})}\\
&= - \frac{1}{(T-t)^{(2n+1)/2}}\cdot \frac{abTr(t)z + O((T-t)^{1/2})}{c^2T^2r(t)^{4n+2}+b^2T^2r(t)^2z^2+O((T-t)^{1/2})}\\
&= - \frac{1}{(T-t)^{(2n+1)/2}}\cdot \frac{abTr(T)z + O((T-t)^{1/2})}{c^2T^2r(T)^{4n+2}+b^2T^2r(T)^2z^2+O((T-t)^{1/2})}\\
&= \frac{1}{(T-t)^{(2n+1)/2}}\cdot \left(-\frac{abTr(T)z}{c^2T^2r(T)^{4n+2}+b^2T^2r(T)^2z^2} + O\big((T-t)^{1/2}\big)\right)\\
&= \frac{1}{(T-t)^{(2n+1)/2}}\cdot \left(-\frac{ab^{(4n+3)/2}z}{a^{4n}c^2+b^{2n+2}z^2} + O\big((T-t)^{1/2}\big)\right).
\end{align*}
We have used \eqref{eqt:general_step_1_2} and \eqref{eqt:general_step_4} to expand $p(r(t)(T-t)^{1/2}+(T-t)^nz)$ and $r(t)$ around $t=T$. The theorem is thus proved.
\end{proof}

One can view Theorem \ref{thm:two-scale} as a special case of Theorem \ref{thm:two-scale_general} with $n=1$, and the profile $\Omega_2$ in \eqref{eqt:two-scale_asymptotic_formula} can be recognized as $\Omega_2=\Omega_{2,1}$. That is to say, one can simply choose $r(t) \equiv r(T) = a/b^{1/2}$ in Theorem \ref{thm:two-scale_general} when $n=1$, and the asymptotic formulas in \eqref{eqt:two-scale_general_formula} are still correct. Nevertheless, the function $r(t)$ obtained by the implicit function theorem captures the bulk location of the solution more accurately than the constant $r(T) = a/b^{1/2}$, especially for $n\geq 2$. In fact, we can see from the proof above that $r(t) - r(T) = O(T-t)$. When $n=1$, we have $(r(t) - r(T))(T-t)^{1/2} = O((T-t)^{3/2}) = O((T-t)^{n+1/2})$, so replacing $r(t)$ by $r(T)$ does not compromise the accuracy of the asymptotic formulas \eqref{eqt:two-scale_general_formula}. However, when $n\geq 2$, the smaller scale $(T-t)^n$ is too small so that replacing $r(t)$ by $r(T)$ will lead to a large error in the formulas.

\subsection{Relation to traveling wave solutions} Note that the profile function $\Omega_{2,n}$ in \eqref{eqt:two-scale_general_formula}, or simply a function like $1/(1+x^2)$, is related to non-blowup, exact traveling wave solutions of the CLM model \eqref{eqt:CLM}. More precisely, one can easily check that, for any speed $r\in \R$ and any constant $c>0$, the traveling wave solution
\begin{equation}\label{eqt:traveling_wave}
\om(x,t) = \frac{-2cr}{1+c^2(x+rt)^2}
\end{equation}
exactly solves \eqref{eqt:CLM}. However, for the solution to develop an exact or asymptotic traveling wave that exists for all time, the initial data $\om_0$ must have no blowup points (i.e. the set $\mathrm{S}$ is empty). From this perspective, we see that the two-scale, asymptotically self-similar blowup solution in Theorem \ref{thm:two-scale_general} is a mixture of two phenomena. On the one hand, due to the existence of a unique blowup point (a zero of $\om_0$ with positive $\mtx{H}(\om_0)$), the solution will blow up at this point in finite time driven by non-linear stretching. On the other hand, in the scope of the smaller scale $(T-t)^n$ (which measures the local characteristic scale), the blowup point $x=0$ is asymptotically farther and farther away from the bulk of the solution concentrating near $r(t)(T-t)^{1/2}$, and thus the solution can also behave like a traveling wave.

Let us say a little more about traveling wave solutions. Suppose that $\om_0(x) = f(x)$ is sufficiently smooth, decays sufficiently fast in the far field, and does not have any zero on $\R$. If we require $\om(x,t) = f(x+rt)$, $r\neq0$, to be a solution of \eqref{eqt:CLM}, then $f$ solves 
\begin{equation}\label{eqt:traveling_wave_profile_equation}
rf' = f\mtx{H}(f).
\end{equation} 
Applying the Hilbert transform, using Tricomi's identity $2\mtx{H}(f\mtx{H}f) = \mtx{H}(f)^2 - f^2$, and denoting $g=\mtx{H}(f)$, we get an ODE system
\[
f' = \frac{1}{r}fg,\quad g' = \frac{1}{2r}(g^2-f^2).
\]
Since $f$ does not change sign on $\R$, $g=\mtx{H}(f)$ must change sign on $\R$ (because $\int_{\R}f(x)\mtx{H}(f)(x)\idiff x=0$). Hence, we may assume that $f(0)=a\neq 0$ and $g(0)=0$ (by translation). We then find that 
\[\left(\frac{f}{f^2+g^2}\right)' = 0\quad \Longrightarrow\quad \frac{f(x)}{f(x)^2+g(x)^2} = \frac{f(0)}{f(0)^2+g(0)^2} = \frac{1}{a},\]
and that
\[\left(\frac{g(x)}{f(x)}\right)' = -\frac{1}{2r}\frac{f(x)^2+g(x)^2}{f(x)} = -\frac{a}{2r}\quad \Longrightarrow\quad \frac{g(x)}{f(x)} = -\frac{a}{2r}x.\]
It then easily follows that
\[f(x) = \frac{a}{1+(a/2r)^2x^2},\quad g(x)=\frac{-(a^2/2r)x}{1+(a/2r)^2x^2}.\]
Moreover, the relation $g=\mtx{H}(f)$ requires that $\sgn(a)=-\sgn(r)$. Hence, by writing $a = -2cr$ for some $c>0$, we get 
\[f(x) = \frac{-2cr}{1+c^2x^2}.\]
This gives all non-blowup traveling wave solutions $\om(x,t) = f(x+rt)$ for \eqref{eqt:CLM}. Note that one can also easily solve the ODE system of $(f,g)$ by considering the complex function $f+\iunit g$. Such a complex technique will be discussed in the next section.

Now, we perform an asymptotic analysis to explain why the asymptotic profile $\Omega$ in the two-scale self-similar blowup ansatz \eqref{eqt:two-scale_blowup} should be the profile of an exact traveling wave solution. Let us instate the assumptions in Theorem \ref{thm:two-scale_general} but pretend that we do not know the result. We first note that, at the origin $x=0$,
\[\frac{\diff\,}{\diff t}\mtx{H}(\om)(0,t) = \frac{1}{2}\left(\mtx{H}(\om)(0,t)^2-\om(0,t)^2\right) = \frac{1}{2}\mtx{H}(\om)(0,t)^2,\]
which implies $\mtx{H}(\om)(0,t) = 2(T-t)^{-1}$ with $T = 2/\mtx{H}(\om)(0,0)$. This is probably the simplest way to see the blowup nature of the CLM model. We have thus found a conserved quantity of the solution:
\begin{equation}\label{eqt:conservation}
(T-t)\mtx{H}(\om)(0,t) = -\frac{T-t}{\pi}\int_{\R}\frac{\om(x)}{x}\idiff x \equiv 2,\quad t\in[0,T].
\end{equation}
Next, suppose that we know the solution will develop a two-scale asymptotically self-similar blowup of the form \eqref{eqt:two-scale_blowup} with some $c_\om<0$, $c_l>c_s>0$, some $r(t)\in C^1([0,T])$ with $r(T)>0$, and some suitable $\Omega\in L^1(\R)$. Let us even ignore the $o(1)$ correction in the formula, that is, 
\begin{equation}\label{eqt:two-scale_ansatz}
\om(x,t) = (T-t)^{c_\om}\Omega\left(\frac{x - r(t)(T-t)^{c_s}}{(T-t)^{c_l}}\right).
\end{equation}
Substituting this ansatz into \eqref{eqt:conservation} gives
\begin{align*}
-2\pi &= \int_{\R}\frac{(T-t)^{c_\om+1}}{x}\Omega\left(\frac{x - r(t)(T-t)^{c_s}}{(T-t)^{c_l}}\right)\idiff x\\
&=  \int_{\R}\frac{(T-t)^{c_\om+c_l+1}}{r(t)(T-t)^{c_s} + (T-t)^{c_l}y}\Omega(y)\idiff y\\
&= \frac{(T-t)^{c_\om+c_l-c_s+1}}{r(t)}\int_{\R}\left(\Omega(y) - \frac{(T-t)^{c_l-c_s}y}{r(t) + (T-t)^{c_l-c_s}y}\Omega(y)\right)\idiff y\\
&\sim \frac{(T-t)^{c_\om+c_l-c_s+1}}{r(t)}\int_{\R}\Omega(y)\idiff y.
\end{align*}
For the last integral above to be conserved as $t\rightarrow T-0$, we thus need $c_\om$, $c_l$, $c_s$ to satisfy the relation
\begin{equation}\label{eqt:power_relation}
c_\om + c_l - c_s + 1=0.
\end{equation}
To proceed, we substitute the ansatz \eqref{eqt:two-scale_ansatz} into the original equation \eqref{eqt:CLM} and get
\begin{align*}
&c_\om(T-t)^{c_\om-1}\Omega(z) + c_l(T-t)^{c_\om-1}z\Omega'(z) + r(t)c_s(T-t)^{c_\om+c_s-c_l-1}\Omega'(z) - r'(t)(T-t)^{c_\om+c_s-c_l}\Omega'(z) \\
&= (T-t)^{2c_\om}\Omega(z)\mtx{H}(\Omega)(z),
\end{align*}
where $z = (x-r(t)(T-t)^{c_s})/(T-t)^{c_l}$. Using the relation \eqref{eqt:power_relation}, we can rewrite the equation above as
\[(T-t)^{c_l-c_s}\left(c_\om\Omega + c_lz\Omega'\right) - (T-t)r'(t)\Omega'(z) + r(t)c_s\Omega' - \Omega\mtx{H}(\Omega) = 0.\]
Since it has been assumed that $c_l>c_s>0$, for this equation to hold as $t\rightarrow T-0$, the profile $\Omega$ must satisfy the leading order equation
\[r(T)c_s\Omega' = \Omega\mtx{H}(\Omega),\]
which is exactly the profile equation \eqref{eqt:traveling_wave_profile_equation} for traveling wave solutions of the CLM model. This explains why the two-scale profile $\Omega$ must have the form of $\Omega_2$ in Theorem \ref{thm:two-scale}.

\subsection{Non-smooth initial data}
Other than the one-scale blowup \eqref{eqt:one-scale_blowup} with a smooth profile function $\Omega$, Elgindi--Jeong \cite{elgindi2020effects} also constructed one-scale self-similar blowup solutions with H\"older continuous profiles for the CLM model. More specifically, they found a family of self-similar solutions of the form 
\begin{equation}\label{eqt:one-scale_blowup_Holder}
\om(x,t) = (T-t)^{-1}\cdot \Omega_\alpha\left(\frac{x}{(T-t)^{1/\alpha}}\right),
\end{equation}
with $\alpha\in(0,1)$, where the profile (up to rescaling) is a one-point $C^\alpha$ function given by
\begin{equation}\label{eqt:one-scale_Holder_profile}
\Omega_\alpha(x) = \frac{-\sin(\alpha\pi/2)\sgn(x)|x|^\alpha}{1+2\cos(\alpha\pi/2)|x|^\alpha + |x|^{2\alpha}},
\end{equation}
which is odd in $x$ and smooth at any $x\neq 0$. Stability of this family of H\"older continuous profiles was also later established in \cite{elgindi2021stable,chen2021finite}, and hence an asymptotically self-similar blowup of the form \eqref{eqt:one-scale_blowup_asymptotic} can happen for odd initial data $\om_0$ such that $\om_0(x)\sim -\sgn(x)|x|^\alpha$ for $x\sim0$.

Therefore, it is also interesting to consider two-scale self-similar blowup of the CLM model with initial data that are degenerate but less regular at the origin. In particular, we may consider 
\begin{equation}\label{eqt:initial_Holder}
\om_0(x) = -\sgn(x)|x|^\beta p(x)
\end{equation}
for some $\beta> 1$ that is not an odd integer and for some smooth positive even function $p$ that decays sufficiently fast in the far field. One can again define $q(x)$ as in \eqref{eqt:p_q_definition}. When $\beta>4$, it is straightforward to check by the definition of the Hilbert transform that $q$ is at least $C^2$ near the origin and the expansion $q(x) = q(0) + O(x^2)$ still holds. Then, we can simply re-implement the proof of Theorem \ref{thm:two-scale_general} (but omit the tedious details here) to show that as $t\rightarrow T-0$, for $x=r(t)(T-t)^{1/2} + O((T-t)^{(\beta-1)/2})$,
\begin{equation}\label{eqt:two-scale_general_formula_Holder}
\om(x,t) = \frac{1}{(T-t)^{\beta/2}}\left(\Omega_2\left(\frac{x - r(t)(T-t)^{1/2}}{(T-t)^{(\beta-1)/2}}\right)+ o(1)\right),
\end{equation}
where $T$ and $r(t)$ are given as in Theorem \ref{thm:two-scale_general}, and $\Omega_2$ is again a traveling wave solution to \eqref{eqt:CLM} that can be rescaled to $-1/(1+x^2)$. When $\beta\in(2, 3)\cup(3,4]$, the corresponding function $q$ is less regular: for $x\sim0$, $q(x)=q(0) + O(|x|^{\beta-2})$ with $\beta\in(2, 3)\cup(3,4)$ and $q(x) = q(0) + O(|x|^2\ln|x|)$ with $\beta=4$. Nevertheless, one can slightly modify our proof and still show that the asymptotic expression \eqref{eqt:two-scale_general_formula_Holder} is valid for $x=r(t)(T-t)^{1/2} + O((T-t)^{(\beta-1)/2})$. Therefore, a two-scale self-similar finite-time blowup also happens with initial data of the form \eqref{eqt:initial_Holder} for $\beta>2$. 

The case $\beta =2$ is a bit more complicated but also interesting by itself, as in this case $q(x)\sim \ln(1/|x|)$ for $x\sim 0$. With a similar strategy, one can substitute the assumption \eqref{eqt:initial_Holder} into the explicit solution formula \eqref{eqt:explicit_solution} to prove that, for $x= r(t)\big((T-t)^{1/2}/|\ln(T-t)|^{1/2}\big) + O\big((T-t)^{1/2}/|\ln(T-t)|^{3/2}\big)$,
\[
\om(x,t) = \frac{|\ln(T-t)|}{T-t}\left(\Omega_2\left(\frac{x - r(t)\big((T-t)^{1/2}/|\ln(T-t)|^{1/2}\big)}{(T-t)^{1/2}/|\ln(T-t)|^{3/2}}\right)+ o(1)\right),
\] 
for some $T$ and $r(t)$ that can be determined in the proof. That is, a two-scale self-similar finite-time blowup still happens, but the ratio between the larger scale and the smaller scale is only $|\ln(T-t)|$.

As for $\beta\in(1,2)$, one should expect a one-scale self-similar blowup to happen. In fact, the non-smooth function \eqref{eqt:one-scale_Holder_profile} constructed in \cite{elgindi2020effects} is also an exact one-scale self-similar profile for $\alpha = \beta \in(1,2)$.

\subsection{On the blowup rates} We end this section with a remark on the blowup power $c_\om$. In the one-scale case in Theorem \ref{thm:one-scale}, since the profile $\Omega_1$ achieves its maximal absolute value at $z = a^2/c$, the maximum of $|\om(x,t)|$ is achieved approximately at $x = (T-t)a^2/c$. At this location, we have
\[
\begin{split}
\om((T-t)a^2/c,t) &= \frac{1}{T-t}\left(\Omega_1(a^2/c)+ O(T-t)\right)\sim \frac{1}{(T-t)},\\
\mtx{H}(\om)((T-t)a^2/c,t) &= \frac{1}{T-t}\left(\mtx{H}(\Omega_1)(a^2/c)+ O(T-t)\right)\sim \frac{1}{(T-t)}.
\end{split}
\]
Thus, by the equation \eqref{eqt:CLM}, the evolution of the maximum of $|\om(x,t)|$ is approximately
\[\partial_t\|\om(\cdot,t)\|_{L^\infty} \approx \frac{1}{(T-t)^2} \approx \|\om(\cdot,t)\|_{L^\infty}^2.\]
The nonlinear stretching $\om\mtx{H}(\om)$ is exactly quadratic in $\om$ at the maximum location of $\om$. This is consistent with $c_\om=-1$.

In the two-scale case in Theorem \ref{thm:two-scale}, the profile $\Omega_2$ achieves its maximal absolute value at $z = 0$, and so the maximum of $|\om(x,t)|$ is achieved approximately at $x = r(T-t)^{1/2}$. However, $\mtx{H}(\Omega_2)(0) = 0$. Hence, at this location, we have
\[
\begin{split}
\om(r(T-t)^{1/2},t) &= \frac{1}{(T-t)^{3/2}}\left(\Omega_2(0)+ O\big((T-t)^{1/2}\big)\right)\sim \frac{1}{(T-t)^{3/2}},\\
\mtx{H}(\om)(r(T-t)^{1/2},t) &= \frac{1}{(T-t)^{3/2}}\left(\mtx{H}(\Omega_2)(0)+ O\big((T-t)^{1/2}\big)\right)\sim \frac{1}{T-t}.
\end{split}
\]
In this case, the evolution of the maximum of $|\om(x,t)|$ is approximately
\[\partial_t\|\om(\cdot,t)\|_{L^\infty} \approx \frac{1}{(T-t)^{5/2}} \approx \|\om(\cdot,t)\|_{L^\infty}^{5/3}.\]
This is consistent with $c_\om=-3/2$. The nonlinear stretching $\om\mtx{H}(\om)$ has a much weaker alignment at the maximum location of $\om$, which however induces a much faster growth.

The argument above, however, does not seem to apply to the result in Theorem \ref{thm:two-scale_general} for a general $n\geq 2$. This is because the additive term $O(T-t)^{1/2}$ in the asymptotic formula of $\mtx{H}(\om)(x,t)$ in \eqref{eqt:two-scale_general_formula} is an overestimate of the error when $x$ is very close to $r(t)(T-t)^{1/2}$. One needs to carry out a more delicate estimate to see how $\mtx{H}(\om)$ scales in $\om$ at the maximum location of $|\om(x,t)|$ when $n\geq 2$.

\begin{figure}[!ht]
\centering
    \begin{subfigure}[b]{0.49\textwidth}
        \includegraphics[width=1\textwidth]{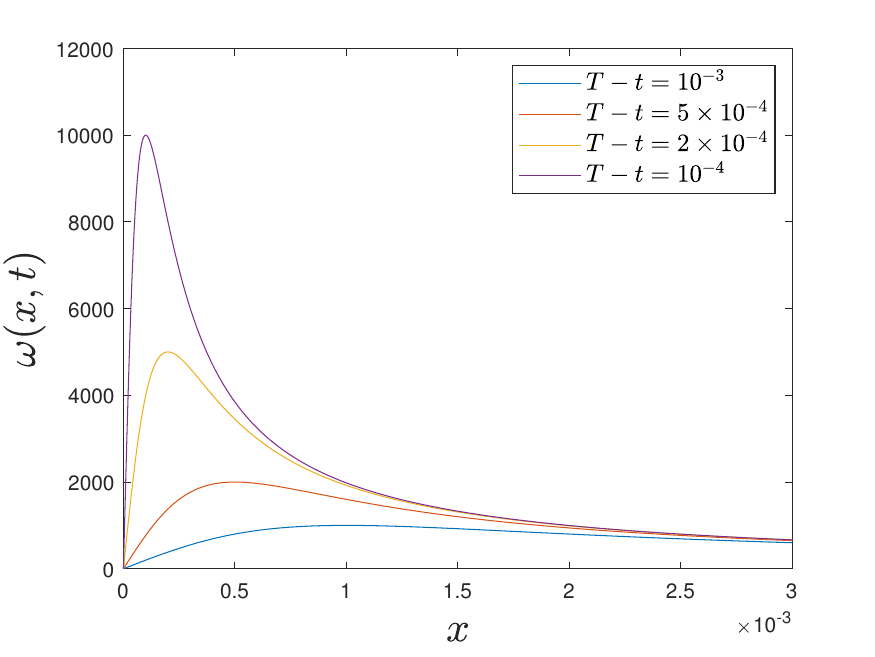}
        \caption{\small Evolution of a one-scale blowup}
    \end{subfigure}
    \begin{subfigure}[b]{0.49\textwidth}
        \includegraphics[width=1\textwidth]{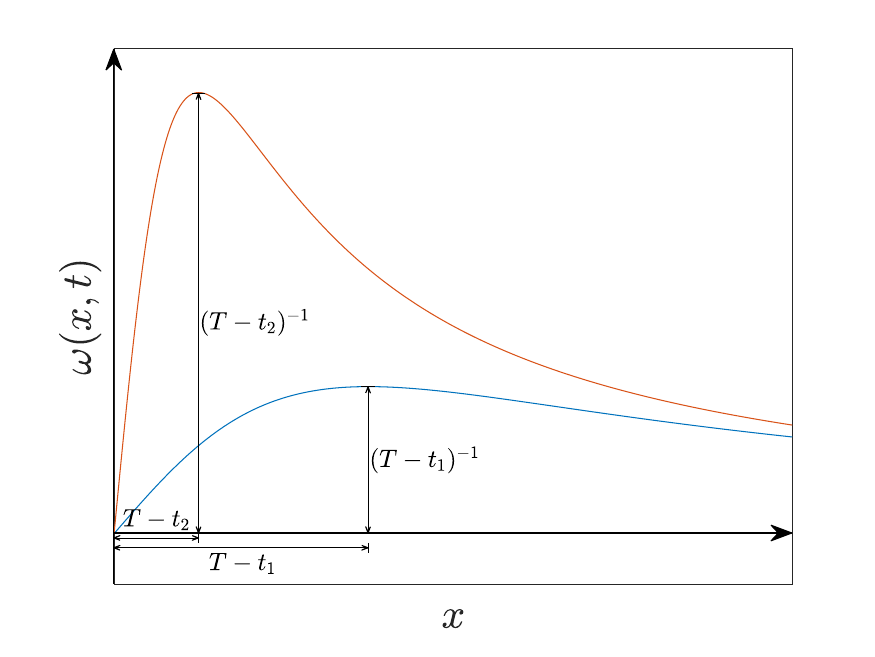}
        \caption{\small Demonstration of the scaling}
    \end{subfigure}
    \caption[One-scale]{Left: Evolution of a one-scale self-similar finite-time blowup with $\om_0(x)=-2x/(1+x^2)$. Right: Demonstration of the blowup and spatial scaling: $c_\om=-1$, $c_l=1$.}
    \label{fig:one-scale}
\end{figure}

\begin{figure}[!ht]
\centering
    \begin{subfigure}[b]{0.49\textwidth}
        \includegraphics[width=1\textwidth]{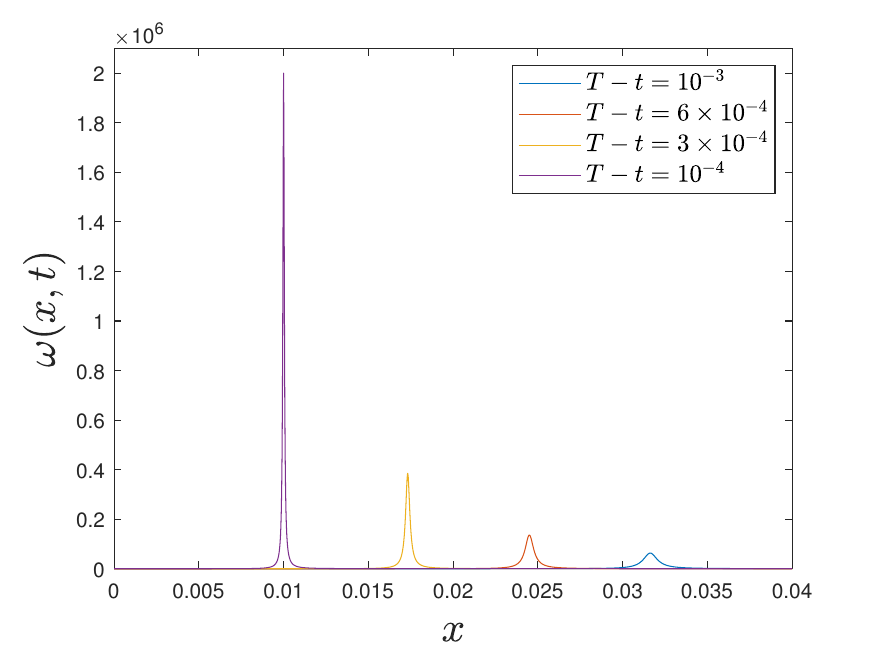}
        \caption{\small Evolution of a two-scale blowup}
    \end{subfigure}
    \begin{subfigure}[b]{0.49\textwidth}
        \includegraphics[width=1\textwidth]{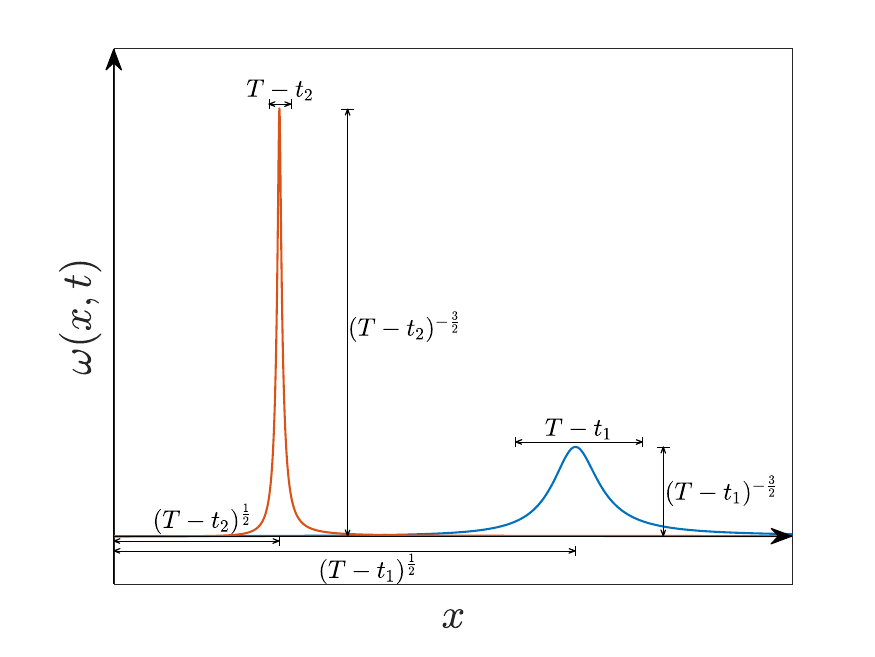}
        \caption{\small Demonstration of the scaling}
    \end{subfigure}
    \caption[Two-scale]{Left: Evolution of a two-scale self-similar finite-time blowup with $\om_0(x)=-2x^3/(4+x^4)$. Right: Demonstration of the blowup and spatial scaling: $c_\om=-3/2$, $c_l=1$, $c_s=1/2$ (corresponding to $n=1$).}
    \label{fig:two-scale}
\end{figure}

\section{A complex analysis}\label{sec:complex_analysis}
In this section, we carry out a complex analysis that transforms \eqref{eqt:CLM} from a non-local ODE to a local one. This kind of techniques has been commonly used in the study of the CLM model \cite{constantin1985simple,elgindi2020effects,lushnikov2021collapse}. We will use this approach to further investigate the nature of the multi-scale blowups.

\subsection{Complexification of the model} Applying the Hilbert transform to \eqref{eqt:CLM} and using Tricomi's identity $2\mtx{H}(\om\mtx{H}\om) = \mtx{H}(\om)^2 - \om^2$, we get a local system of the pair $(\om,\mtx{H}(\om))$:
\begin{equation*}
\begin{split}
\om_t &= \om\mtx{H}(\om),\\
\mtx{H}(\om)_t &= \frac{1}{2}\mtx{H}(\om)^2-\frac{1}{2}\om^2.
\end{split}
\end{equation*}
Consider the complex-valued function $\eta(x,t) := \om(x,t) + \iunit\mtx{H}(\om)(x,t)$. Then the equation of $\eta(x,t)$ is simply 
\[\eta_t = -\frac{\iunit}{2}\eta^2,\quad \eta(x,0) = \eta_0(x) := \om_0(x) + \iunit\mtx{H}(\om_0)(x).\]
From this equation one can easily obtain the explicit solution formula \eqref{eqt:explicit_solution} for $\om(x,t)$ and $\mtx{H}(\om)(x,t)$. This is how it was originally derived in \cite{constantin1985simple}. It may be even more convenient to consider the function $\zeta(x,t) = 1/\eta(x,t)$, which solves
\[\zeta_t = \frac{\iunit}{2},\quad \zeta(x,0) = \zeta_0(x) = 1/\eta_0(x).\]
It follows immediately that 
\[\zeta(x,t) = \zeta_0(x) + \frac{\iunit}{2}t.\]
If the pair $(\om,\mtx{H}(\om))$ blows up at some finite space-time point $(x_0,T)$, then $\eta$ also blows up at $(x_0,T)$, meaning that $x_0$ is a zero of $\zeta(x,T)$:
\[0 = \zeta(x_0,T) = \zeta_0(x_0) + \iunit T/2= \frac{\om_0(x_0)}{\om_0(x_0)^2 + \mtx{H}(\om_0)(x_0)^2} + \iunit\left(\frac{T}{2} - \frac{\mtx{H}(\om_0)(x_0)}{\om_0(x_0)^2 + \mtx{H}(\om_0)(x_0)^2}\right).\]
That is, $\om_0(x_0)=0$ and $\mtx{H}(\om_0)(x_0) = 2/T$. Conversely, if the point $x_0$ satisfies $\om_0(x_0)=0$ and $\mtx{H}(\om_0)(x_0) = 2/T$, then $\zeta(x_0,t)$ will become $0$ at $t=T$. This observation explains why a finite-time blowup will first happen at the point $x_0 \in \mathrm{S} =\{x: \om_0(x)=0, \mtx{H}(\om_0)(x)>0\}$ such that $\mtx{H}(\om_0)(x_0) = \sup\{\mtx{H}(\om_0)(x):x\in\mathrm{S}\}$. 

\subsection{Pole dynamics} Let us now extend the problem to the whole complex plane $\mathbb{C}$. Let $\eta$ be a meromorphic function on $\mathbb{C}$, whose poles only lie in the interior of the lower half-space $\mathbb{C}_-$. That is, $\eta(z)$ is holomorphic on the upper half-space $\mathbb{C}_+$. Suppose that $\eta$ decays to $0$ at the infinity. Then, it is well known that $\eta(x)$, the restriction of $\eta$ on the real line $\R$, satisfies
\[\mtx{H}(\real\eta(x)) = \imag\eta(x),\]
where $\real$ and $\imag$ denote the real part and the imaginary part, respectively.

Consider the ODE
\begin{equation}\label{eqt:eta_equation}
\eta_t = -\frac{\iunit}{2}\eta^2,\quad \eta(z,0) = \eta_0(z),
\end{equation}
where $\eta_0$ is holomorphic on the upper half-space $\mathbb{C}_+$. Then, up to any time $T$ such that the solution $\eta(z,t)$ is also holomorphic on $\mathbb{C}_+$ for $t\in[0,T)$, the real part $\om(x,t) = \real\eta(x,t)$ on $\R$ solves 
\[\om_t = \om\mtx{H}(\om),\quad \om(x,0) = \om_0(x) = \real\eta_0(x),\]
which is exactly the CLM model \eqref{eqt:CLM}. The poles of $\eta(z,t)$, which initially lie in the interior of $\mathbb{C}_-$, may travel upward and touch the real line $\R$ at some finite time $T$. Then a singularity of $\om$ will occur on $\R$ at time $T$. Hence, we can understand the blowup of $\om$ by studying the dynamics of the poles of $\eta$.

This is equivalent to studying the dynamics of the zeros of the function $\zeta(z,t) = 1/\eta(z,t)$ that is holomorphic on the lower half-space $C_-$. Again, $\zeta(z,t)$ solves 
\[\zeta_t = \frac{\iunit}{2},\quad \zeta(z,0) = \zeta_0(z) = 1/\eta_0(z),\]
and thus 
\[\zeta(z,t) = \zeta_0(z) + \frac{\iunit}{2}t.\]
Let $Z(t)$ be a moving zero of $\zeta(z,t)$ (i.e. a pole of $\eta(z,t)$) such that $\imag Z(0)<0$. That is, $\zeta(Z(t),t)\equiv0$. We then find that 
\[\zeta_z(Z(t),t)\cdot Z'(t) + \zeta_t(Z(t),t) = 0.\]
Note that we also have
\[\zeta_z(Z(t),t) = \zeta_{0,z}(Z(t)),\quad \zeta_t(Z(t),t)=\frac{\iunit}{2}.\]
Therefore, the motion of $Z(t)$ is governed by the equation
\begin{equation}\label{eqt:zero_dynamic}
Z'(t) = -\frac{\iunit}{2\zeta_{0,z}(Z(t))}.
\end{equation}
That is, the trajectory of $Z(t)$ is completely determined by $\zeta_0$ or by $\eta_0$. If the initial function $\zeta_0$ has a simple explicit form, we can also obtain $Z(t)$ by directly solving the equation 
\begin{equation}\label{eqt:zero_first_integral}
\zeta_0(Z(t)) + \frac{\iunit}{2}t = \zeta(Z(t),t) = 0.
\end{equation}

Despite the simplicity of the evolution of $\zeta(z,t)$, tracking the motion of its zeros can be highly non-trivial if the initial function $\zeta_0$ is complicated with many zeros. Nevertheless, we can still use a few basic examples to illustrate how the trajectories of the poles of $\eta$ with different initial data $\eta_0$ relate to different types of blowups of the CLM model. One can find in Figure \ref{fig:trajectory} plots of the pole trajectories in all these examples.

Before proceeding to the examples, we want to remark that the equation of $\eta(z,t)$ can be continued to all time regardless of the locations of its poles. However, after one or more poles of $\eta(z,t)$ cross the real line from below to above at some time $T$, $\eta(z,t)$ is no longer holomorphic on the upper half-space $\mathbb{C}_+$, and thus its restriction on the real line $\R$ no longer satisfies $\mtx{H}(\real\eta(x,t)) = \imag\eta(x,t)$. As a consequence, $\om(x,t) = \real \eta(x,t)$ no longer solves the CLM equation \eqref{eqt:CLM} after the blowup time $T$. Hence, we are only interested in the pole trajectories of $\eta(z,t)$ before they cross the real line.

\subsection{Example I: Exact one-scale self-similar blowup} Let us first choose 
\[\zeta_0(z) = -\frac{\iunit+z}{2},\quad \eta_0(z) = -\frac{2}{\iunit+z},\]
which corresponds to 
\[\om_0(x) = \real\left(-\frac{2}{\iunit+x}\right) = -\frac{2x}{1+x^2},\quad \mtx{H}(\om_0)(x)=\imag\left(-\frac{2}{\iunit+x}\right) = \frac{2}{1+x^2}.\]
Apparently, $\eta_0(z)$ is holomorphic on $\mathbb{C}_+$. The only pole of $\eta_0$ is $Z(0) = -\iunit$. By \eqref{eqt:zero_dynamic}, the dynamic of $Z(t)$ is given by
\[Z'(t) = -\frac{\iunit}{2\zeta_{0,z}(Z(t))} = \iunit\,,\]
that is, $Z(t) = \iunit(t-1)$. The first time $Z(t)$ touches the real line is when $t = 1$, at which time $\eta(z,t)$ has a pole on the real line and thus $\om(x,t)$ blows up. 

Note that $\eta(z,t)$ can be realized as
\[\eta(z,t) = \frac{2}{Z(t)-z} = \frac{2}{\iunit(t-1)-z} = \frac{1}{1-t}\cdot \frac{2}{-\iunit-z/(1-t)} = \frac{1}{1-t}\cdot \eta_0\left(\frac{z}{1-t}\right),\]
which is exactly self-similar in time, and so is its restriction on $\R$.

\subsection{Example II: Asymptotic one-scale self-similar blowup} This time, we choose 
\[\zeta_0(z) = \frac{2(1/2-\iunit-z)(1/2+\iunit+z)}{5(\iunit+z)} = \frac{1}{5}\left(\frac{1}{2(\iunit+z)} - 2(\iunit+z)\right),\]
which has two zeros, $Z_1(0) = 1/2-\iunit$ and $Z_2(0) = -1/2-\iunit$, in the lower half-space that are symmetric with respect to the imaginary axis. Correspondingly, we have
\[\eta_0(z) = \frac{5}{4}\left(\frac{1}{1/2-\iunit-z} - \frac{1}{1/2+\iunit+z}\right),\]
and
\[\om_0(x) = -\frac{30x + 40x^3}{25+24x^2+16x^4},\quad \mtx{H}(\om_0)(x) = \frac{50 + 40x^2}{25+24x^2+16x^4}.\]
Note that $\om_0$ satisfies all the assumptions in Theorem \ref{thm:one-scale}.

We use \eqref{eqt:zero_first_integral} to find that the zeros of $\zeta(z,t)$ satisfy
\[\frac{1}{5}\left(\frac{1}{2(\iunit+Z(t))} - 2(\iunit+Z(t))\right)=\zeta_0(Z(t)) = - \frac{\iunit}{2}t,\]
which yields
\[
Z(t) = 
\begin{cases}
\displaystyle \pm\frac{1}{2}\sqrt{1-\frac{25}{16}t^2} + \iunit\left(\frac{5}{8}t-1\right), &t\in[0,\frac{4}{5}],\\\\
\displaystyle \iunit \left(\pm \frac{1}{2}\sqrt{\frac{25}{16}t^2-1} + \frac{5}{8}t - 1\right), &t\in(\frac{4}{5},1].
\end{cases}
\]
We can see that the traveling of $Z(t)$ is much more complicated than in the previous example as it now consists of two phases. In the first phase $t\in[0,4/5]$, the two zeros 
\[Z_1(t) = \frac{1}{2}\sqrt{1-\frac{25}{16}t^2} + \iunit\left(\frac{5}{8}t-1\right),\quad Z_2(t) = -\frac{1}{2}\sqrt{1-\frac{25}{16}t^2} + \iunit\left(\frac{5}{8}t-1\right),\]
travel in curved lines from their starting points to the joining point $-\iunit/2$ where they merge into one point at $t=4/5$. Then, in the second phase $t\in[4/5,1]$, the two zeros,
\[Z_1(t) = \iunit \left(\frac{1}{2}\sqrt{\frac{25}{16}t^2-1} + \frac{5}{8}t - 1\right),\quad Z_2(t) = \iunit \left(- \frac{1}{2}\sqrt{\frac{25}{16}t^2-1} + \frac{5}{8}t - 1\right),\]
separate again and both travel on the imaginary axis. However, $Z_1(t)$ travels upward and reaches the real line $\R$ at $t=1$, while $Z_2(t)$ travels downward and away from the real line. 

Hence, the blowup of $\om(x,t)$ happens at $t=1$ when $Z_1(t)$, one of the two poles of $\eta(z,t)$, arrives at the origin. We can compute that
\begin{align*}
\eta(z,t) &= \frac{1}{\zeta(z,t)} = \frac{1}{\zeta_0(z) + \iunit t/2}\\
&= -\frac{5(\iunit+z)}{2(z-Z_1(t))(z-Z_2(t))} = -\frac{5(\iunit+z)}{4(\iunit(1-5t/8)+z)}\left(\frac{1}{z-Z_1(t)}+\frac{1}{z-Z_2(t)}\right).
\end{align*}
We can see that, as $t\rightarrow T-0$, the blowup of $\eta(z,t)$ near $z=0$ is dominated by the component
\[\frac{1}{z-Z_1}.\]
Since in the second phase $|Z_1(t)| = |\imag Z_1(t)| = O(1-t)$, the blowup is of only one spatial scale $1-t$. The contribution from the other pole $Z_2$ to the blowup at $z=0$ is only $O(1)$, which nevertheless makes the blowup not exactly self-similar on the real line.

\subsection{Example III: Two-scale blowup for $n=1$} Let us modify the previous example by choosing 
\[\zeta_0(z) = \frac{(1-\iunit-z)(1+\iunit+z)}{4(\iunit+z)} = \frac{1}{4}\left(\frac{1}{\iunit+z} - (\iunit+z)\right),\]
which again has two zeros, $Z_1(0) = 1-\iunit$ and $Z_2(0) = -1-\iunit$, in the lower half-space that are symmetric with respect to the imaginary axis. Correspondingly, we have
\[\eta_0(z) = \frac{2}{1-\iunit-z} - \frac{2}{1+\iunit+z},\]
and
\[\om_0(x) = -\frac{4x^3}{4+x^4},\quad \mtx{H}(\om_0)(x) = \frac{8+4x^2}{4+x^4}.\]
Note that $\om_0$ satisfies all the assumptions in Theorem \ref{thm:two-scale}. In particular, $\om(x)\sim -x^3$ near $x=0$.

We use \eqref{eqt:zero_first_integral} to find that the zeros of $\zeta(z,t)$ satisfy
\[\frac{1}{4}\left(\frac{1}{\iunit+Z(t)} - (\iunit+Z(t))\right)=\zeta_0(Z(t)) = - \frac{\iunit}{2}t,\]
that is 
\[Z(t) = \pm\sqrt{1-t^2} + \iunit(t-1),\quad t\in[0,1].\]
More precisely, we have
\[Z_1(t) = \sqrt{1-t^2} + \iunit(t-1),\quad Z_2(t) = -\sqrt{1-t^2} + \iunit(t-1).\]
Different from the previous example, in this case the traveling of the poles only has one phase before they reach the real line $\R$ for the first time. In particular, $Z_1(t)$ and $Z_2(t)$ merge into one point at the origin at $t=1$. Again, this implies the blowup of $\om(x,t)$ on $\R$ happens at $t=1$.

Let us only look at $Z_1(t)$ due to symmetry. If we represent the trajectory of $Z_1(t)$ in the $(x,y)$ coordinate, we find that 
\[Z_1(t) = (X_1(t),Y_1(t)) = \left(\sqrt{1-t^2}\,,\,t-1\right),\]
that is, 
\[Y_1(X_1) = \sqrt{1-X_1^2}-1.\]
The trajectory of $Z_1(t)$ is not a straight line, which explains why in this case the solution $\om(x,t)$ is not exactly self-similar. Moreover, we can see that $|Y_1(t)|\sim (1-t)$ converges to $0$ much faster than $|X_1(t)|\sim(1-t)^{1/2}$. This agrees with the two-scale blowup phenomenon described in Theorem \ref{thm:two-scale}.

We can also compute that 
\begin{align*}
\eta(z,t) &= \frac{1}{\zeta(z,t)} = \frac{1}{\zeta_0(z) + \iunit t/2}\\
&= -\frac{4(\iunit+z)}{(z-Z_1(t))(z-Z_2(t))} = -\frac{2(\iunit+z)}{\iunit(1-t)+z}\left(\frac{1}{z-Z_1(t)}+\frac{1}{z-Z_2(t)}\right).
\end{align*}
Consider the Laurent series of $\eta$ around $z = Z_1(t) = X_1(t)+\iunit Y_1(t)$ in terms of $\tilde{z} = (z-Z_1(t))/|Y_1(t)|$ for $|\tilde{z}|=O(1)$ and $t\in[0,1)$:
\begin{align*}
\eta(z,t) &= -\frac{2(\iunit+Z_1(t)+|Y_1(t)|\tilde{z})}{X_1(t) + |Y_1(t)|\tilde{z}}\left(\frac{1}{|Y_1(t)|\tilde{z}} + \frac{1}{2X_1(t) + |Y_1(t)|\tilde{z}}\right)\\
&=-\frac{2(\iunit + X_1(t)+\iunit Y_1(t))}{X_1(t)|Y_1(t)|}\cdot \frac{1}{\tilde {z}} + \frac{\iunit-X_1(t)+\iunit Y_1(t)}{X_1(t)^2} + O\left(\frac{1}{X_1(t)^2}\cdot \frac{|Y_1(t)\tilde{z}|}{X_1(t)}\right)\\
&= -\frac{2\iunit}{X_1(t)|Y_1(t)|}\cdot \frac{1}{\tilde {z}} + O\left(\frac{1}{X_1(t)^2} + \frac{1}{|Y_1(t)|}\right)\\
&= -\frac{2\iunit}{(1-t)^{3/2}(1+t)^{1/2}}\cdot \frac{1}{\tilde {z}} + O\left(\frac{1}{1-t}\right).
\end{align*}
Then, the real part of the restriction of $\eta$ on the real line is given by
\begin{align*}
\om(x,t) &= \mathrm{Re}(\eta(x,t)) \\
&= -\frac{2}{(1-t)^{3/2}(1+t)^{1/2}}\cdot \frac{1}{1 + \Big(\frac{x-(1-t^2)^{1/2}}{1-t}\Big)^2} + O\left(\frac{1}{1-t}\right)\\
&= -\frac{1}{(1-t)^{3/2}}\cdot \frac{\sqrt{2}}{1 + \Big(\frac{x-\sqrt{2}(1-t)^{1/2}}{1-t}\Big)^2} + O\left(\frac{1}{1-t}\right),
\end{align*}
which agrees with the asymptotic formula of $\om(x,t)$ in Theorem \ref{thm:two-scale} with $a=1$, $b=1/2$, $c=1/4$.

\subsection{Example IV: One-scale blowup away from the origin} Let us see what happens if the zeros of $\zeta_0$ are even farther away from each other. Consider 
\[\zeta_0(z) = \frac{(2-\iunit-z)(2+\iunit+z)}{4(\iunit+z)} = \frac{1}{2}\left(\frac{2}{\iunit+z} - \frac{\iunit+z}{2}\right),\]
whose two zeros are $Z_1(0) = 2-\iunit$ and $Z_2(0) = -2-\iunit$. Similarly, we have
\[\eta_0(z) = \frac{2}{2-\iunit-z} - \frac{2}{2+\iunit+z},\]
and
\[\om_0(x) = \frac{12x-4x^3}{25+x^4-6x^2},\quad \mtx{H}(\om_0)(x) = \frac{20+4x^2}{25+x^4-6x^2}.\]
We still have $\om_0(0)=0$, $\om_0'(0)\neq 0$, and $\mtx{H}(\om_0)(0)>0$. However, $x=0$ is not the only zero of $\om_0$. The other two zeros of $\om_0$ are $x=\pm\sqrt{3}$, and $\mtx{H}(\om_0)(\pm \sqrt{3}) = 2 > 4/5 = \mtx{H}(\om_0)(0)$. Therefore, the blowup $\om(x,t)$ should first happen at $x=\pm\sqrt{3}$ and $t=2/\mtx{H}(\om_0)(\pm \sqrt{3})=1$.

Let us verify this by looking at the traveling of $Z(t)$. We use \eqref{eqt:zero_first_integral} to find
\[\frac{1}{2}\left(\frac{2}{\iunit+Z(t)} - \frac{\iunit+Z(t)}{2}\right)=\zeta_0(Z(t)) = - \frac{\iunit}{2}t,\]
that is 
\[Z(t) = \pm\sqrt{4-t^2} + \iunit(t-1),\quad t\in[0,1].\]
More precisely, we have
\[Z_1(t) = \sqrt{4-t^2} + \iunit(t-1),\quad Z_2(t) = -\sqrt{4-t^2} + \iunit(t-1).\]
The first time $Z_1$ and $Z_2$ touch the real line is at $t=1$, which is before they can meet each other on the imaginary axis. Hence, the blowup of $\om(x,t)$ happens simultaneously at the two different points $x=\pm\sqrt{3}$ as expected. By representing the trajectory of $Z_1(t)$ in the $(x,y)$ coordinate,
\[Z_1(t) = (X_1(t),Y_1(t)) = \left(\sqrt{4-t^2}\,,\,t-1\right),\]
we see that $|X_1(t)-\sqrt{3}|\sim |Y_1(t)|\sim 1-t$ as $t\rightarrow 1-0$, which means that the blowup is of only one spatial scale around each of the blowup points.

\subsection{Example V: Two-scale blowup for $n=2$} Next, we provide an example of two-scale asymptotically self-similar blowup with more degenerate initial data at the blowup point. Let 
\[\zeta_0(z) =\frac{8(\iunit + z)^3}{3-9\iunit z -8z^2},\quad \eta_0(z)= \frac{3-9\iunit z -8z^2}{8(\iunit + z)^3}.\]
$\eta_0(z)$ has a triple pole $Z(0) = -\iunit$ in the lower half-space. Correspondingly, we have
\[\om_0(x) = -\frac{x^5}{(1+x^2)^3},\quad \mtx{H}(\om_0)(x) = \frac{3+10x^2+15x^4}{8(1+x^2)^3}.\]
It is easy to check that this $\om_0$ satisfies all the assumptions in Theorem \ref{thm:two-scale_general} for $n=2$, and thus a two-scale blowup with $c_\om=-5/2$, $c_l=2$, $c_s=1/2$ will happen. In fact, one can verify directly by the explicit solution formula \eqref{eqt:explicit_solution} that $\om(x,t)$ will develop a two-scale finite-time blowup at the origin such that, as $t\rightarrow T-0$,
\[\om(x,t) = \frac{1}{(T-t)^{5/2}}\left(\Omega\left(\frac{x - r(t)(T-t)^{1/2}}{(T-t)^2}\right)+ O((T-t)^{1/2})\right),\]
where
\[T=\frac{16}{3},\quad \Omega(z) = -\frac{3888}{6561+4096z^2},\quad r(t) = \frac{3}{4} - \frac{9}{16}(T-t).\]

As an exercise, the reader can also study the pole dynamics of the $\eta(z,t)$ in this example, which is quite complicated. Instead of presenting the detailed and tedious formula of the pole trajectories, we simply plot them in Figure \ref{fig:trajectory}(e): The initial triple pole, located at $-\iunit$, immediately branches into three simple poles, two of which travel in curved lines and merge into one at the origin at time $T=16/3$, inducing a two-scale blowup of $\om(x,t)$ on the real line. One can check that, as $t\rightarrow T-0$, either pole $Z(t) = X(t) +\iunit Y(t)$ approaching the origin satisfies $|X(t)|\sim(T-t)^{1/2}$ and $|Y(t)|\sim (T-t)^2$, that is, $|Y(t)| \sim |X(t)|^4$. This means the trajectory of $Z(t)$ for $n=2$ has a flatter profile near the origin than that of the two-scale blowup for $n=1$ in Example III.

\subsection{Example VI: Non-blowup traveling wave} In this last example, we show that a traveling wave solution $\om(x,t) = \om_0(x+rt)$ corresponds to a traveling pole of $\eta(z,t)$ whose trajectory is parallel to the real line. Consider the initial data
\[\zeta_0(z) = \frac{\iunit+z}{2\iunit},\quad \eta_0(z) = \frac{2\iunit}{\iunit+z},\]
which corresponds to 
\[\om_0(x) = \frac{2}{1+x^2},\quad \mtx{H}(\om_0)(x) = \frac{2x}{1+x^2}.\]
As we have argued in the previous section (see \eqref{eqt:traveling_wave} with $c=1$, $r=-1$), the solution to \eqref{eqt:CLM} with initial data $\om_0$ is a traveling wave
\[\om(x,t) = \om_0(x-t) = \frac{2}{1+(x-t)^2}.\]

According to \eqref{eqt:zero_dynamic}, the motion of the only zero $Z(t)$ of $\zeta(z,t)$ is governed by 
\[Z'(t) = -\frac{\iunit}{2\zeta_{0,z}(Z(t))} = 1,\quad Z(0) = -\iunit,\]
that is,
\[Z(t) = Z(0) + t = t-\iunit.\]
Hence, $Z(t)$ is traveling to the right on the line $\imag z=-\iunit$ that is parallel to the real line. This is consistent with the traveling wave of $\om(x,t)$.\\

\begin{figure}[!ht]
\centering
    \begin{subfigure}[b]{0.32\textwidth}
        \includegraphics[width=1\textwidth]{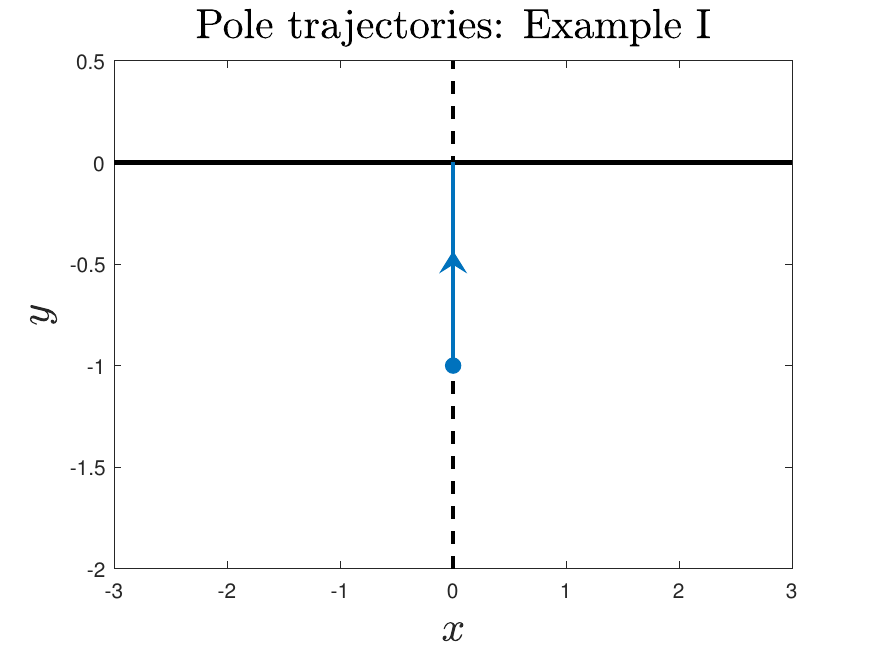}
        \caption{\small Example I: one-scale blowup}
    \end{subfigure}
    \begin{subfigure}[b]{0.32\textwidth}
        \includegraphics[width=1\textwidth]{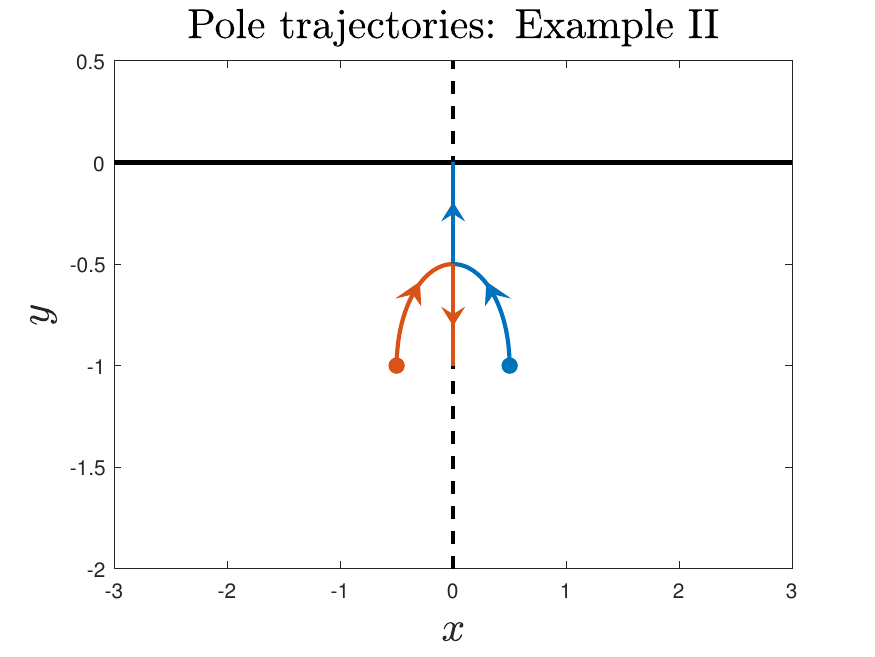}
        \caption{\small Example II: one-scale blowup}
    \end{subfigure}
    \begin{subfigure}[b]{0.32\textwidth}
        \includegraphics[width=1\textwidth]{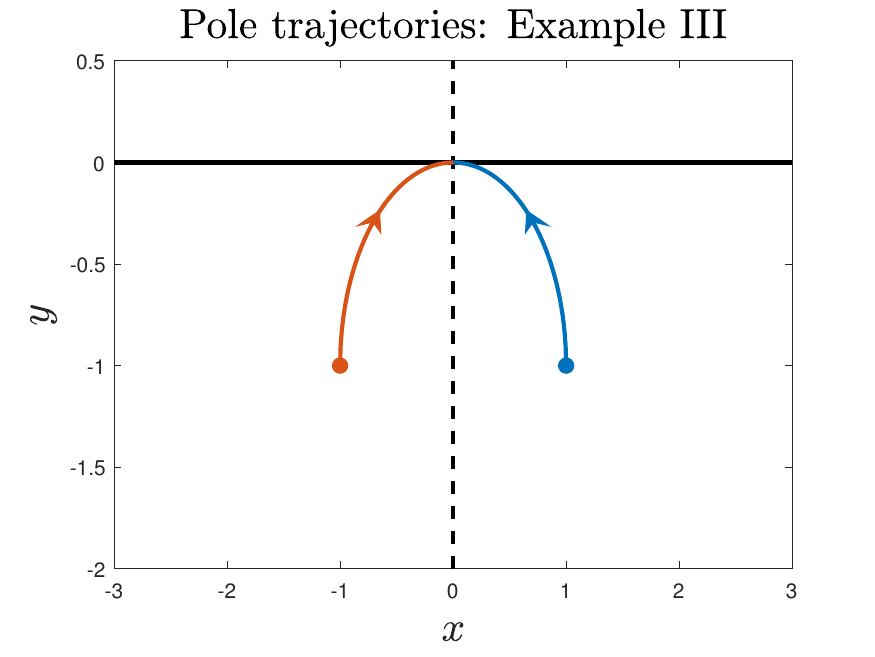}
        \caption{\small Example III: two-scale blowup}
    \end{subfigure}
    \begin{subfigure}[b]{0.32\textwidth}
        \includegraphics[width=1\textwidth]{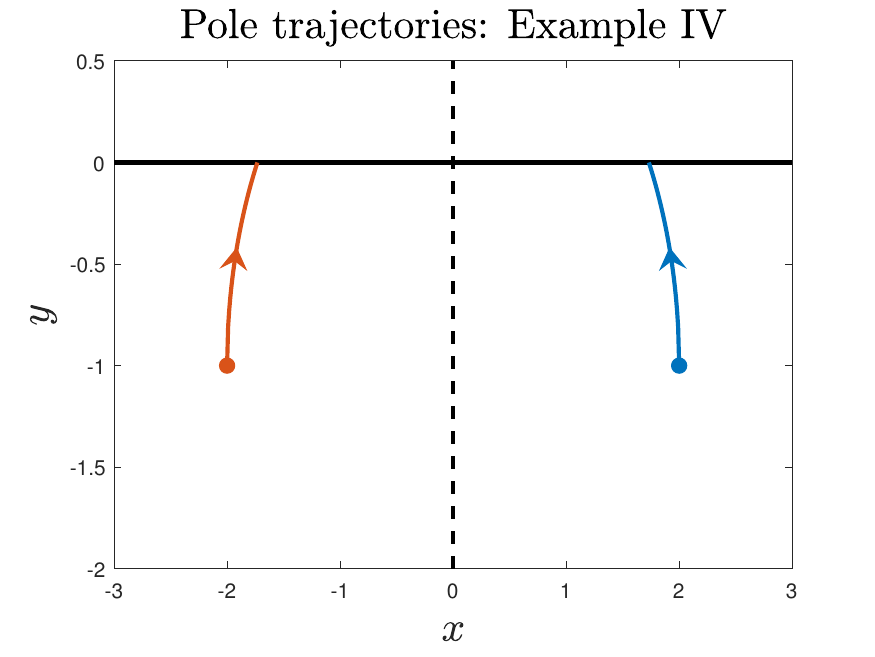}
        \caption{\small Example IV: one-scale blowup}
    \end{subfigure}
    \begin{subfigure}[b]{0.32\textwidth}
        \includegraphics[width=1\textwidth]{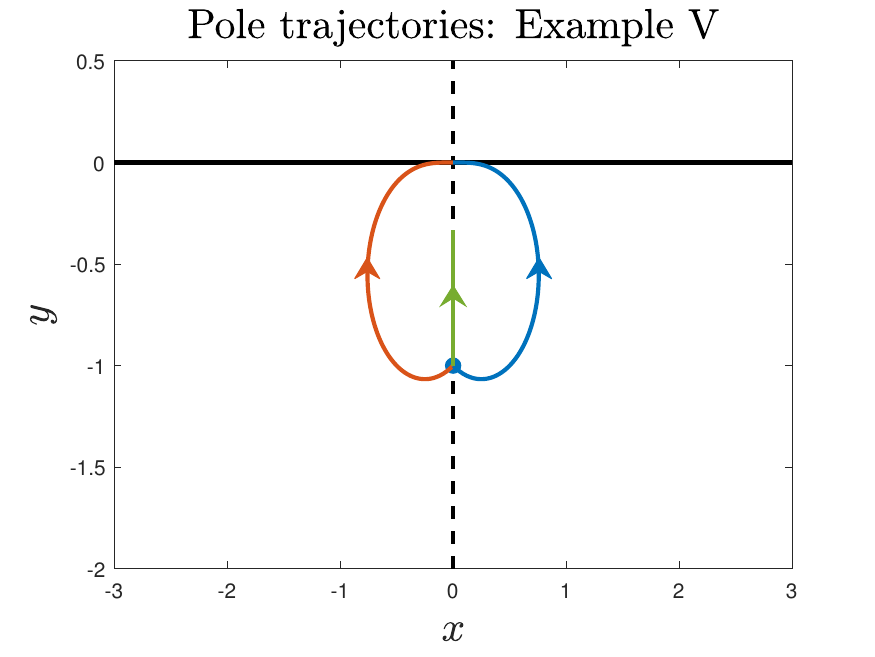}
        \caption{\small Example V: two-scale blowup}
    \end{subfigure}
    \begin{subfigure}[b]{0.32\textwidth}
        \includegraphics[width=1\textwidth]{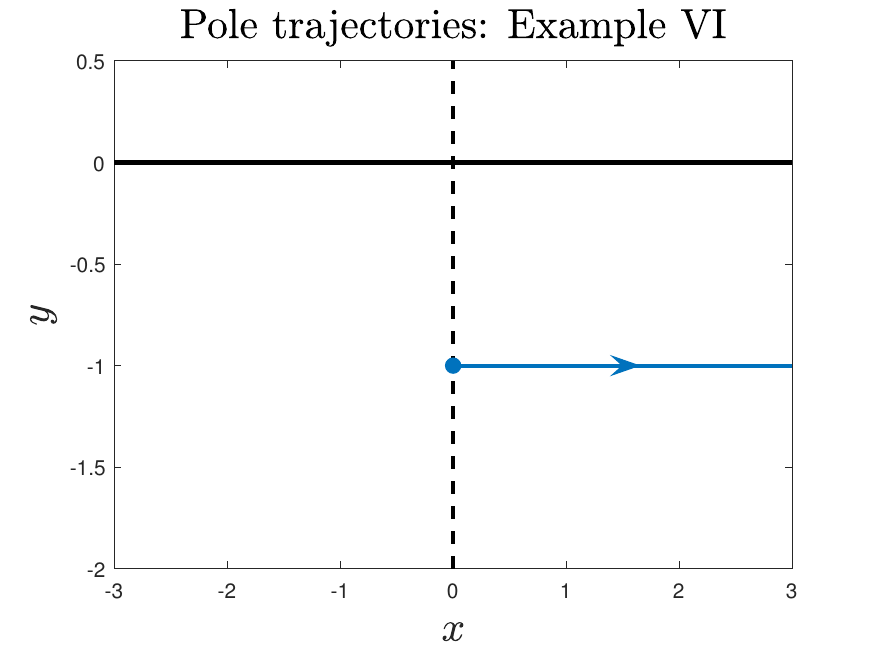}
        \caption{\small Example VI: traveling wave}
    \end{subfigure}
    \caption[Trajectories]{Plots of pole trajectories in all six examples.}
    \label{fig:trajectory}
\end{figure} 

\subsection{General analysis} As we have mentioned, analyzing the evolution of all poles of $\eta(z,t)$ for general initial data $\eta_0$ can be very difficult. Nevertheless, we can still perform a relatively general analysis on the local behavior of poles of $\eta(z,t)$ as they approach the real line $\R$. Let us write 
\[\eta_0(z) = u(x,y) + \iunit v(x,y),\]
that is, $u=\real \eta$ and $v=\imag \eta$. On the real line, we have $u(x,0) = \om_0(x)$ and $v(x,0) = \mtx{H}(\om_0)(x)$. Let $\om_0$ be odd in $x$ and sufficiently smooth. Suppose that $x=0$ is the unique zero of $\om_0$ in a small neighborhood $(-\delta,\delta)$ for some sufficiently small $\delta>0$, and for $|x|< \delta$,
\[\om_0(x) =-cx^{2n+1} + O(x^{2n+3}),\quad \mtx{H}(\om_0)(x) = a+bx^2 + O(x^4),\]
for some $a,c>0$, $b\in\R$ and some integer $n\geq 0$. We also assume that $b>0$ when $n\geq 1$. Note that $n=0$ corresponds to the one-scale case in Theorem \ref{thm:one-scale}, $n=1$ corresponds to the basic two-scale case in Theorem \ref{thm:two-scale}, and $n\geq 1$ corresponds to the general two-scale case in Theorem \ref{thm:two-scale_general}.

Furthermore, assume that the initial poles of $\eta_0$ all lie beyond the open disk $\mathbb{D}_\delta=\{z:|z|<\delta\}$. Then, $u(x,y)$ and $v(x,y)$ are conjugate harmonic functions in the disk $\mathbb{D}$ satisfying 
\[u_x = v_y,\quad u_y=-v_x.\]
Then one can easily show that, in the disk $\mathbb{D}$,
\begin{align*}
u(x,y) &= -cx^{2n+1} - 2bxy + h.o.t.,\\
v(x,y) &= a + bx^2 - (2n+1)cx^{2n}y + h.o.t.
\end{align*}
Here, $h.o.t.$ denotes higher order terms.

Consider the function $\zeta_0 = 1/\eta_0$. If we write 
\[\zeta_0(z) = A(x,y) + \iunit B(x,y),\]
then
\[A = \frac{u}{u^2 + v^2},\quad B = -\frac{v}{u^2 + v^2}.\]
Let us also assume that $\zeta_0$ has no pole in the disk $\mathbb{D}_\delta$. Then, $A$ and $B$ are also conjugate harmonic functions, and 
\begin{align*}
A(x,y) &= -\frac{c}{a^2}x^{2n+1} - \frac{2b}{a^2}xy + h.o.t.,\\
B(x,y) &= -\frac{1}{a} + \frac{b}{a^2}x^2 - \frac{(2n+1)c}{a^2}x^{2n}y + h.o.t. 
\end{align*}
In particular, we have
\begin{equation}\label{eqt:Ax_Bx_expansion}
\begin{split}
A_x(x,y) &= -\frac{(2n+1)c}{a^2}x^{2n} - \frac{2b}{a^2}y + h.o.t.,\\
B_x(x,y) &= \frac{2b}{a^2}x - \frac{2n(2n+1)c}{a^2}x^{2n-1}y + h.o.t. 
\end{split}
\end{equation}

Next, suppose that $Z(t)=X(t) +\iunit Y(t)$, a pole of $\eta(z,t)$, enters the disk $\mathbb{D}_\delta$ and will eventually arrive at the origin $(x,y)=(0,0)$ at the blowup time $T$. Recall the dynamic equation \eqref{eqt:zero_dynamic} of $Z(t)$, which in terms of $A,B$ writes
\[\big(A_x(X(t),Y(t)) + \iunit B_x(X(t),Y(t))\big)\cdot (X'(t) + \iunit Y'(t)) = -\frac{\iunit}{2},\]
that is,
\begin{align*}
A_x X'(t) - B_xY'(t) &= 0,\\
B_x X'(t) + A_xY'(t) &=-\frac{1}{2}.
\end{align*}
We have used the fact that $A_x = B_y$ and $A_y=B_x$ in the disk $\mathbb{D}_\delta$. It follows that
\begin{equation}\label{eqt:X_Y_dynamic}
X'(t) = -\frac{B_x}{2(A_x^2+B_x^2)},\quad Y'(t) = -\frac{A_x}{2(A_x^2+B_x^2)}.
\end{equation}
Hence, we can use the local expansions of $A_x,B_x$ to determine the asymptotic behavior of $Z(t)$ as it approaches the origin. We do it in two cases, $n=0$ and $n\geq 1$.

\subsubsection{Case $n=0$} Substituting \eqref{eqt:Ax_Bx_expansion} into \eqref{eqt:X_Y_dynamic} yields
\begin{align*}
X'(t) &= -\frac{(b/a^2)X(t) + h.o.t.}{c^2/a^4 + h.o.t.} = -\frac{a^2b}{c^2}X(t) + h.o.t.,\\
Y'(t) &= \frac{c/2a^2 + h.o.t.}{c^2/a^4 + h.o.t.} = \frac{a^2}{2c} + h.o.t.
\end{align*}
As we have assumed that $(X(t),Y(t))$ will arrive at the origin $(0,0)$ at $t=T$, the above implies that, when $t$ is sufficiently close to $T$, 
\[X(t) =0,\quad |Y(t)| \sim T-t.\]
We have used that $Y(t)<0$ for $t<T$. This type of trajectories of $Z(t)$ has appeared in Examples I, II and is consistent with the one-scale blowup described in Theorem \ref{thm:one-scale}.

\subsubsection{Case $n\geq 1$} Recall in this case we assume $a,b,c>0$. Substituting \eqref{eqt:Ax_Bx_expansion} into \eqref{eqt:X_Y_dynamic} yields
\begin{align*}
X'(t) &= -\frac{(b/a^2)X(t) + h.o.t.}{(4b^2/a^4)(X(t)^2 + Y(t)^2) + h.o.t.} = -\frac{a^2}{4b}\cdot \frac{X(t)}{X(t)^2+Y(t)^2} + h.o.t.,\\
Y'(t) &= \frac{((2n+1)c/2a^2)X(t)^{2n} + (b/a^2)Y(t) + h.o.t.}{(4b^2/a^4)(X(t)^2 + Y(t)^2) + h.o.t.} = \frac{((2n+1)a^2c/2)X(t)^{2n} + a^2bY(t)}{4b^2(X(t)^2 + Y(t)^2) } + h.o.t.
\end{align*}
We first note that 
\[\frac{\diff Y}{\diff X} = \frac{Y'(t)}{X'(t)} = -\frac{(2n+1)c}{2b}X(t)^{2n-1} - \frac{Y(t)}{X(t)} + h.o.t.\]
This equation, together with the assumption that $X(T)=Y(T)=0$, imply that
\[Y(t) = -\frac{c}{2b}X(t)^{2n} + h.o.t.\]
Substituting this into the ODE of $X(t)$ yields 
\[X'(t) = -\frac{a^2}{4b} X(t)^{-1} + h.o.t.,\]
which then implies 
\[X(t)^2 = \frac{a^2}{2b}(T-t) + h.o.t.\]

In summary, as $t\rightarrow T-0$, we have
\[|X(t)| \sim (T-t)^{1/2},\quad |Y(t)| \sim (T-t)^n.\]
This class of trajectories has appeared in Examples III, V and is consistent with the two-scale blowup described in Theorem \ref{thm:two-scale_general} with $c_s=1/2$, $c_l=n$. That is, $|X(t)|$ corresponds to the larger scale that is always $(T-t)^{1/2}$, while $|Y(t)|$ corresponds to the smaller scale $(T-t)^{n}$ that can be arbitrarily small.

\subsection*{Acknowledgement} The authors are supported by the National Key R\&D Program of China under the grant 2021YFA1001500.

\bibliographystyle{myalpha}
\bibliography{reference}

\end{document}